\begin{document}
\theoremstyle{plain}
\newtheorem{thm}{Theorem}[section]
\newtheorem{theorem}[thm]{Theorem}
\newtheorem*{theorem2}{Theorem}
\newtheorem{lemma}[thm]{Lemma}
\newtheorem{corollary}[thm]{Corollary}
\newtheorem{corollary*}[thm]{Corollary*}
\newtheorem{proposition}[thm]{Proposition}
\newtheorem{proposition*}[thm]{Proposition*}
\newtheorem{conjecture}[thm]{Conjecture}
\theoremstyle{definition}
\newtheorem{construction}[thm]{Construction}
\newtheorem{notations}[thm]{Notations}
\newtheorem{question}[thm]{Question}
\newtheorem{problem}[thm]{Problem}
\newtheorem{remark}[thm]{Remark}
\newtheorem{remarks}[thm]{Remarks}
\newtheorem{definition}[thm]{Definition}
\newtheorem{claim}[thm]{Claim}
\newtheorem{assumption}[thm]{Assumption}
\newtheorem{assumptions}[thm]{Assumptions}
\newtheorem{properties}[thm]{Properties}
\newtheorem{example}[thm]{Example}
\newtheorem{comments}[thm]{Comments}
\newtheorem{blank}[thm]{}
\newtheorem{observation}[thm]{Observation}
\newtheorem{defn-thm}[thm]{Definition-Theorem}

\def\supp{\operatorname{supp}}


\title[An explicit formula for the Berezin star product]{An explicit formula for the Berezin star product}

\author{Hao Xu}
        \address{Department of Mathematics, Harvard University, Cambridge, MA 02138, USA}
        \email{haoxu@math.harvard.edu}

        \begin{abstract}
        We
        prove an explicit formula of the Berezin star product on K\"ahler manifolds. The formula is expressed as a summation
        over certain strongly connected digraphs. The proof relies on a combinatorial interpretation of Engli\v{s}' work on the asymptotic
        expansion of the Laplace integral.
        \end{abstract}

\keywords{Berezin star product, Berezin transform, Weyl invariants}
\thanks{{\bf MSC(2010)}  53D55 32J27}

    \maketitle

\section{Introduction}

Bayen, Flato, Fronsdal, Lichnerowicz and Sternheimer \cite{BFF,
BFF2} introduced quantization as a deformation of the usual
commutative product into a noncommutative associative
$\star$-product. The existence of $\star$-products on symplectic
manifolds was solved by De Wilde and Lecomte \cite{WL}, Omori, Maeda
and Yoshioka \cite{OMY, OMY2}, and Fedosov \cite{Fed}. The most
spectacular result on the existence of $\star$-products was
Kontsevich's proof \cite{Kon} of a universal formula that gives a
$\star$-product on any Poisson manifold.

Let $(M,g)$ be a K\"ahler manifold of dimension $n$. On a coordinate
chart $\Omega$, the K\"ahler form is given by
$$\omega_g=\frac{\sqrt{-1}}{2\pi}\sum_{i,j=1}^n
g_{i\overline{j}}dz_i\wedge dz_{\overline{j}}.$$ If $\Omega$ is
contractible, there exists a K\"{a}hler potential $\Phi$ satisfying
$$\partial\overline\partial\Phi=\sum_{i,j=1}^n
g_{i\overline{j}}dz_i\wedge dz_{\overline{j}}.$$

Recall that around each point $x\in M$, there exists a normal
coordinate system such that
\begin{equation} \label{eqnormal}
g_{i\bar j}(x)=\delta_{ij}, \qquad g_{i\bar j k_1\dots
k_r}(x)=g_{i\bar j \bar l_1\dots\bar l_r}(x)=0
\end{equation}
for all $r\leq N\in \mathbb N$, where $N$ can be chosen arbitrary
large and $g_{i\bar j k_1\dots k_r}=\partial_{k_1\dots k_r}g_{i\bar
j}$.

The Poisson bracket of the functions $f_1,f_2\in C^\infty(M)$ can be
expressed locally as
\begin{equation}
\{f_1,f_2\}=i g^{k\bar l}\left(\frac{\partial f_1}{\partial
z^k}\frac{\partial f_2}{\partial\bar z^l}-\frac{\partial
f_2}{\partial z^k}\frac{\partial f_1}{\partial\bar z^l}\right).
\end{equation}

Let $C^\infty(M)[[h]]$ denote the algebra of formal power series in
$h$ over $C^\infty(M)$. A star product is an associative $\mathbb
\mathbb C[[h]]$-bilinear product $\star$ such that $\forall
f_1,f_2\in C^\infty(M)$,
\begin{equation} \label{eqberdef}
f_1\star f_2=\sum_{j=0}^\infty h^j C_j(f_1,f_2),
\end{equation}
where the $\mathbb C$-bilinear operators $C_j$ satisfy
\begin{equation}
C_0(f_1,f_2)=f_1 f_2,\qquad C_1(f_1,f_2)-C_1(f_2,f_1)=i\{f_1,f_2\}.
\end{equation}

There are constructions of $\star$-products on restricted types of
K\"ahler manifolds by Berezin \cite{Ber, Ber2}, Moreno and
Ortega-Navarro \cite{Mor, MO}, and Cahen, Gutt and Rawnsley
\cite{CGR, CGR2, CGR3}. The existence and classification of
deformation quantizations with ``separation of variables'' for all
K\"ahler manifolds was shown by Karabegov \cite{Kar}. There are
alternative constructions by Bordemann and Waldmann \cite{BW},
Reshetikhin and Takhtajan \cite{RT}, Schlichenmaier \cite{Sch},
Engli\v{s} \cite{Eng3} and Gammelgaard \cite{Gam}. See also the
recent preprints of Karabegov \cite{Kar4, Kar5}.

Let $\Phi(x,y)$ be an almost analytic extension of $\Phi(x)$ to a
neighborhood of the diagonal, i.e. $\bar\partial_x\Phi$ and
$\partial_y\Phi$ vanish to infinite order for $x=y$ (cf. \cite{BS}).
We can assume $\overline{\Phi(x,y)}=\Phi(y,x)$. Consider the real
valued function
$$D(x, y) = \Phi(x, x) + \Phi(y, y)-\Phi(x, y)-\Phi(y, x),$$
which is called the Calabi diastatic function \cite{Cal}. It is
easily seen that in a sufficiently small neighborhood of the
diagonal, $D(x, y)\geq0$ and $D(x, y)=0$ if and only $x=y$.

For $\alpha > 0$, consider the weighted Bergman space of all
holomorphic function on $\Omega$ square-integrable with respect to
the measure $e^{-\alpha \Phi}\frac{w_g^n}{n!}$.

We denote by $K_\alpha(x,y)$ the reproducing kernel. Locally, it is
often the case that $K_\alpha(x,y)$ has an asymptotic expansion in a
small neighborhood of the diagonal  (see \cite{Ber, Eng, KS}) for
$\alpha\rightarrow\infty$,
\begin{equation}\label{eqb1}
K_\alpha (x,y)\sim e^{\alpha \Phi (x,y)} \sum^\infty_{k=0} B_k (x,y)
\alpha^{n-k}.
\end{equation}
To ensure the convergence of this expansion, we could take the
sufficiently small neighborhood $\Omega$ to be a strongly pseudoconvex domain
with real analytic boundary (cf. \cite{Eng, Eng2}). We will implicitly take such
$\Omega$ throughout the paper, which is needed to guarantee the
convergence of other asymptotic expansions such as \eqref{eqber3}.
 The Bergman
kernels $B_k(x,x)$ on the diagonal turn out to be linear
combinations with universal coefficients of Weyl invariants (see
Section \ref{graph} for the definition) independent of $\Omega$. For
discussions on the convergence in the compact K\"ahler case, see
\cite{KS}.

The Bergman kernel $B_k$ in the setting when $\Omega$ is a compact
K\"ahler manifold was also much studied (cf. \cite{Cat, Tia, Zel,
Lu, Loi}). Dai, Liu and Ma \cite{DLM} proved the most general
version for the asymptotic expansion of Bergman kernels on orbifolds
and symplectic manifolds.

The {\it Berezin transform} is the integral operator
\begin{equation} \label{eqber}
I_\alpha f(x)=\int_\Omega
f(y)\frac{|K_\alpha(x,y)|^2}{K_\alpha(x,x)}e^{-\alpha
\Phi(y)}\frac{w_g^n(y)}{n!}.
\end{equation}
At any point for which $K_\alpha (x,x)$ invertible, the integral
converges for each bounded measurable function $f$ on $\Omega$. Note
that \eqref{eqb1} implies that for any $x$, $K_\alpha (x,x)\neq 0$
if $\alpha$ is large enough.

The Berezin transform has an asymptotic expansion for
$\alpha\rightarrow\infty$ (cf. \cite{Eng, KS}),
\begin{equation} \label{eqber3}
I_\alpha f(x)=\sum^\infty_{k=0} Q_k f(x)\alpha^{-k},
\end{equation}
where $Q_k$ are linear differential operators.

The Berezin star product was introduced by Berezin \cite{Ber}
through symbol calculus for linear operators on weighted Bergman
spaces (cf. \cite{Eng3}). As noted by Karabegov \cite{Kar2}, the
Berezin star product is related to the asymptotic expansion of the
Berezin transform in a nice way (cf. \cite{Eng4}). Denote by
$c_{j\alpha\beta}\partial^{\alpha}\bar\partial^{\beta}$ the
coefficients of $Q_j$,
\begin{equation}
Q_j f=\sum_{\alpha,\beta\text{
multiindices}}c_{j\alpha\beta}\partial^{\alpha}\bar\partial^{\beta}f.
\end{equation}
Then the coefficients of the Berezin star product are given by
bilinear differential operators
\begin{equation} \label{eqkar}
C_j(f_1,f_2):=\sum_{\alpha,\beta}
c_{j\alpha\beta}(\bar\partial^{\beta}f_1)(\partial^{\alpha} f_2).
\end{equation}

The Berezin star product is equivalent to the Berezin-Toeplitz star
product $\star_{BT}$ via the Berezin transform (cf. \cite{KS})
\begin{equation} \label{eqbt}
f_1\star_{BT} f_2=I^{-1}(I f_1\star I f_2),
\end{equation}
where $I:=I_{1/h}$ is obtained from substituting $\alpha$ by $1/h$
in $I_{\alpha}$.

Recall that the Toeplitz operator $T^{(m)}_f$ for $f\in C^\infty(M)$
is defined to be
\begin{equation}
T^{(m)}_f:=\Pi^{(m)}(f\cdot):\quad H^0(M,L^m)\rightarrow H^0(M,L^m),
\end{equation}
where $\Pi^{(m)}:L^2(M,L^m)\rightarrow H^0(M,L^m)$ is the
projection.

It was proved by Schlichenmaier \cite{Sch} that the Berezin-Toeplitz
star product \eqref{eqbt} is the unique star product
\begin{equation}
f_1\star_{BT} f_2:=\sum_{j=0}^\infty h^j C^{BT}_j(f_1,f_2),
\end{equation}
such that the following asymptotic expansion holds
\begin{equation} \label{eqbt2}
T_{f_1}^{(m)}T_{f_2}^{(m)} \sim \sum_{j=0}^\infty m^{-j}
T_{C^{BT}_j(f_1,f_2)}^{(m)},\qquad m\rightarrow\infty.
\end{equation}

The Berezin transform was introduced by Berezin \cite{Ber2} for
symmetric domains in $\mathbb C^n$. Among the pioneers in extending
Berezin's results are Unterberger and Upmeier \cite{UU}, Engli\v{s}
\cite{Eng0}, and Engli\v{s} and Peetre \cite{EP}. The coefficients
$Q_i,\,i\leq 3$ have been obtained by Engli\v{s} \cite{Eng}.
Karabegov and Schlichenmaier \cite{KS} proved the asymptotic
expansion of the Berezin transform for compact K\"ahler manifolds
and showed that the Berezin-Toeplitz deformation quantization has
the property of separation of variables. Ma-Marinescu
\cite[Ch.7]{MM3, MM2} developed the theory of Toeplitz operators on
symplectic manifolds in the presence of a twisting vector bundle and
showed that the calculation of the coefficients of the
Berezin-Toeplitz star product is local. Moreover, in the K\"ahler
case, Ma-Marinescu \cite{MM} calculated the first three terms of the
expansion of the kernels of Toeplitz operators and the
Berezin-Toeplitz star product with a twisting vector bundle.
(cf. also \cite{BMS, BG, Cha, Eng3, Eng4, Fin, Kar3} and especially
the nice survey by Schlichenmaier \cite{Sch2} for related works on
Berezin-Toeplitz quantization).

Using the closed graph-theoretic formula \eqref{eqq}, we computed
$Q_k,\,k\leq 4$ in Example \ref{computeq} and the appendix. We also
computed the first four coefficients of the Berezin-Toeplitz star
product in Example \ref{computebt}.

As discussed in Section \ref{graph}, we can write $Q_k f$ as a sum
over graphs.
\begin{equation}
Q_k f=\sum_{\Gamma\in  \dot{\mathcal G}(k)}Q_{\Gamma}\Gamma,\qquad
k\geq0.
\end{equation}
The closed formula \eqref{eqq} of $Q_{\Gamma}$ leads to the main
result of this paper, which is the following explicit formula for
the Berezin star product.

\begin{theorem} \label{main} Let $M$ be a K\"ahler manifold.
Fix a normal coordinate system around $x\in M$, the following
equation for the Berezin $\star$-product holds at $x$.
\begin{equation} \label{eqmain}
f_1\star f_2(x)=\sum_{\Gamma=(V\cup\{f\},E)\in\dot{\mathcal
G}_{scon}}\frac{\det(A(\Gamma_-)-I)}{|\dot{\rm Aut}(\Gamma)|}
h^{|E|-|V|} D_{\Gamma}(f_1,f_2)\Big|_x,
\end{equation}
where $\Gamma$ runs over the set of strongly connected pointed
stable graphs; $\Gamma_-$ is the subgraph of $\Gamma$ obtained by
removing the distinguished vertex $f$ from $\Gamma$ and $A(\Gamma_-)$ is
its adjacency matrix.  $D_{\Gamma}(f_1,f_2)$ is the partition
function of $\Gamma$ (see Definition \ref{partition} in Section
\ref{graph}). Moreover, we can convert the right-hand side to the
invariant tensor expression easily.
\end{theorem}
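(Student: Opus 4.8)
The plan is to derive the explicit formula \eqref{eqmain} by combining two ingredients already in place: the graph-theoretic expansion $Q_k f = \sum_{\Gamma \in \dot{\mathcal G}(k)} Q_\Gamma \Gamma$ for the Berezin transform, with the closed formula \eqref{eqq} for the weight $Q_\Gamma$; and Karabegov's relation \eqref{eqkar} that reads off the bilinear operators $C_j(f_1,f_2)$ from the coefficients $c_{j\alpha\beta}$ of the differential operators $Q_j$. First I would substitute the coefficient-level identity \eqref{eqkar} into the defining expansion \eqref{eqberdef} of the star product, so that $f_1 \star f_2 = \sum_j h^j \sum_{\alpha,\beta} c_{j\alpha\beta}(\bar\partial^\beta f_1)(\partial^\alpha f_2)$. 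The task is then to repackage this double sum over multi-indices as a single sum over graphs, using the fact that each $Q_j$ is itself encoded by graphs in $\dot{\mathcal G}(j)$.

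The key combinatorial step is to track how attaching the two test functions $f_1, f_2$ interacts with the graph expansion. In the expansion of $Q_j$, the distinguished vertex $f$ carries the single function on which $Q_j$ acts; when we pass to the bilinear operator via \eqref{eqkar}, the derivatives $\partial^\alpha$ and $\bar\partial^\beta$ that previously hit that single function are now split between $f_2$ (holomorphic derivatives) and $f_1$ (antiholomorphic derivatives). I would show that this splitting corresponds exactly to promoting the distinguished vertex $f$ to a vertex whose incoming and outgoing edges are redistributed to the two arguments, so that the relevant graphs are precisely the pointed stable graphs $\Gamma = (V \cup \{f\}, E)$. The strong-connectedness condition $\Gamma \in \dot{\mathcal G}_{scon}$ should emerge as the statement that only those graphs survive in which the distinguished vertex genuinely couples both arguments — graphs that fail to be strongly connected factor through lower-order terms or contribute to normalization and cancel. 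The power $h^{|E|-|V|}$ is the bookkeeping of the $\alpha$-degree from \eqref{eqb1}--\eqref{eqber3} translated through $\alpha = 1/h$, matching edges against vertices.

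The heart of the matter is identifying the numerical coefficient as $\det(A(\Gamma_-) - I)/|\dot{\mathrm{Aut}}(\Gamma)|$. The factor $1/|\dot{\mathrm{Aut}}(\Gamma)|$ is the standard symmetry factor accounting for overcounting of labelings, and should fall out of a careful orbit-counting argument once the graphs are taken up to isomorphism. The determinant factor $\det(A(\Gamma_-) - I)$, where $\Gamma_-$ removes the distinguished vertex, is where Engli\v{s}' asymptotic analysis of the Laplace integral enters: I expect this to be exactly the combinatorial shadow of the inverse of the Hessian (the quadratic part of the diastatic function $D(x,y)$) appearing in the stationary-phase expansion of \eqref{eqber}, reorganized so that the sum over connected pieces of the Laplace expansion produces a determinant of the adjacency form. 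This determinant evaluation is the main obstacle: I would need to prove that when the weights $Q_\Gamma$ from \eqref{eqq} are substituted and the dependence on the removed vertex is summed out, the resulting scalar collapses to $\det(A(\Gamma_-) - I)$. Concretely, I would verify this by interpreting $A(\Gamma_-) - I$ as the matrix governing loop contributions at the internal vertices and invoking the matrix-tree- or permanent-type identity implicit in Engli\v{s}' formula, then checking the result against the low-order cases $k \le 4$ computed in Example~\ref{computeq} as a consistency test.
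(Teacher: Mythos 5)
Your first two paragraphs reproduce the paper's own route: once the closed formula \eqref{eqq} (Theorem \ref{engq}) is available, combining it with Karabegov's relation \eqref{eqkar} and the graph expansion \eqref{eqengrq} yields \eqref{eqmain} almost immediately --- the splitting of barred derivatives onto $f_1$ and unbarred derivatives onto $f_2$ is exactly the partition function of Definition \ref{partition}, the restriction to strongly connected graphs is forced because $Q_\Gamma=0$ otherwise, and $h^{|E|-|V|}$ is just $h^k$ for a graph of weight $k$, since $Q_k$ enters the expansion \eqref{eqber3} with $\alpha^{-k}=h^k$. Had you stopped there, your proposal would coincide with the paper's (deliberately short) derivation of Theorem \ref{main} from Theorem \ref{engq}.

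The difficulty is your third paragraph, which you call ``the heart of the matter'': as written it is either circular or a genuine gap. If, as your first paragraph states, the closed formula \eqref{eqq} is an ingredient ``already in place,'' then the coefficient $\det(A(\Gamma_-)-I)/|\dot{\rm Aut}(\Gamma)|$ requires no further identification; the sentence ``I would need to prove that when the weights $Q_\Gamma$ from \eqref{eqq} are substituted and the dependence on the removed vertex is summed out, the resulting scalar collapses to $\det(A(\Gamma_-)-I)$'' proposes to prove something \eqref{eqq} already asserts, and there is no ``summing out'' to perform. If instead you intend to prove \eqref{eqq} itself, your sketch would not succeed. The determinant is not the shadow of the inverse Hessian in stationary phase (the Hessian only supplies the leading normalization and the edge contractions $g^{i\bar j}$); in the paper it arises from Engli\v{s}' formula \eqref{eqeng} through Lemma \ref{dettree}, which expands $\frac{1}{\det g}\partial_{\alpha_1}\cdots\partial_{\alpha_r}\det g$ as a signed sum over vertex-disjoint cycle covers, combined with the coefficient theorem of spectral graph theory (Theorem \ref{cds}) identifying such signed sums of linear subgraphs with $\det(A(\Gamma_-)-I)$; this is neither a matrix-tree nor a permanent identity, and it proves the formula for $R_\Gamma$ (Theorem \ref{engr}), not yet for $Q_\Gamma$. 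Moreover $Q_k\neq R_k$: the Berezin transform differs from the raw Laplace expansion by the Bergman-kernel normalization, and showing that $Q_\Gamma$ vanishes off strongly connected graphs while equaling $R_\Gamma$ on them requires the recursion \eqref{eqb5} together with an inductive cancellation argument (Proposition \ref{sink} and the three cases in the proof of Theorem \ref{engq}). Your phrase that non-strongly-connected graphs ``factor through lower-order terms or contribute to normalization and cancel'' names this phenomenon but supplies no mechanism, and checking against the low-order cases $k\le 4$ is a consistency test, not a proof.
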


The expression of the Berezin star product as a summation over
graphs is not surprising as there are remarkable pioneering works by
Kontsevich \cite{Kon}, and Reshetikhin and Takhtajan \cite{RT}.
However, the formula \eqref{eqmain} may be the simplest concerning
the star-products.

Kontsevich's universal formula of a star product on Poisson
manifolds was written as a summation over labeled directed graphs
with two distinguished vertices and the coefficients are certain
integrals over configuration spaces. The Feynmann graph formula of
Reshetikhin and Takhtajan is for the non-normalized Berezin star
product ($1$ is not the unit in the product) on arbitrary K\"ahler
manifolds. In an attempt to normalize Reshetikhin and Takhtajan's
formula, Gammelgaard \cite{Gam} obtained a universal formula for any
star product with separation of variables corresponding to a given
classifying Karabegov form. His formula is expressed as a summation
over weighted acyclic graphs. As pointed out by a referee,
Gammelgaad's formula does not allow to express the Berezin star
product directly since the Karabegov form of the Berezin star
product is not explicitly known in general. In
the quantizable compact K\"ahler case, the Karabegov form of the Berezin
star product was given in terms of the Bergman kernel evaluated along the diagonal (cf. \cite[\S 7]{Sch2})
and the Karabegov form of the Berezin-Toeplitz star product was also identified in \cite{KS}.

Although Berezin transform \eqref{eqber} can be defined only under very restrictive conditions, the coefficients
$Q_k$ of its asymptotic expansion, as long as it exists, are universal polynomials of curvature tensor independent of
the underlying K\"ahler metric.
As a result, the formula \eqref{eqmain} defines a universal (associative) star product with separation of variables on
an arbitray K\"ahler manifold. A proof will be given at the end of \S \ref{berezin}.

\

\noindent{\bf Acknowledgements} The author is grateful to Professor
Kefeng Liu, whose help and guidance saved my math career. The author
thanks the referees for very helpful comments and suggestions.

\vskip 30pt
\section{Weyl invariants and graphs} \label{graph}

A {\it digraph} (directed graph) $G=(V,E)$ is defined to be a finite
set $V$ (whose elements are called vertices) and a multiset $E$ of
ordered pairs of vertices, called directed edges. Throughout the
paper, we allow a digraph to have loops and multi-edges. The
indegree and outdegree of a vertex $v$ are denoted by $\deg^-(v)$
and $\deg^+(v)$ respectively. The adjacency matrix $A=A(G)$ of a
digraph $G$ with $n$ vertices is a square matrix of order $n$ whose
entry $A_{ij}$ is the number of directed edges from vertex $i$ to
vertex $j$.

A digraph $G$ is called {\it connected} if the underlying undirected
graph is connected, and {\it strongly connected} if there is a
directed path from each vertex in $G$ to every other vertex. For a
digraph $G=(V,E)$, we can partition $V$ into strongly connected
components, namely the maximal strongly connected subgraphs of $G$.
Among these strongly connected components, we have at least one {\it
sink} (a component without outgoing edges) and one {\it source} (a
component without incoming edges).

A {\it full subdigraph} $H$ of a digraph $G$ is a digraph whose
vertex set is a subset of the vertex set of $G$, the set of edges
connecting any two vertices $v_1$ and $v_2$ in $H$ equals the set of
edges connecting $v_1$ and $v_2$ in $G$.

Given two subgraphs $G_1$ and $G_2$ of $G$, we denote by $G_1+G_2$
the full subgraph of $G$ whose vertices are the vertices of $G_1$
and $G_2$.

Recall the definition of stable and semistable graphs in the
previous paper \cite{Xu}. These graphs were used to represent Weyl
invariants, which include the coefficients of the asymptotic
expansion of the Bergman kernel.
\begin{definition}
We call a vertex $v$ of a digraph $G$ semistable if we have
$$\deg^-(v)\geq1,\ \deg^+(v)\geq1,\ \deg^-(v)+\deg^+(v)\geq3.$$
$G$ is called {\it semistable} if each vertex of $G$ is semistable.
We call $v$ stable if $\deg^-(v)\geq2,\ \deg^+(v)\geq2$. A digraph
$G$ is {\it stable} if each vertex of $G$ is stable. The set of
semistable and stable graphs will be denoted by $\mathcal G^{ss}$
and $\mathcal G$ respectively. For any $G=(V,E)\in \mathcal G^{ss}$,
its weight $\omega(G)$ is defined to be the integer $|E|-|V|$. We
denote by $\mathcal G^{ss}(k)$ and $\mathcal G(k)$ respectively the
set of semistable and stable digraphs with weight $k$. Let $\mathcal
G_{con}(k)$ and $\mathcal G_{scon}(k)$ respectively be the set of
connected and strongly connected graphs in $\mathcal G(k)$.
\end{definition}

We need to make slight extension of these concepts, in order to
write the Berezin star product as a summation over graphs.

\begin{definition}
A {\it (one-)pointed semistable (stable) graph} $\Gamma=(V\cup\{f\},E)$ is
defined to be a digraph with a distinguished vertex labeled by $f$, such
that each ordinary vertex $v\in V$ is semistable (stable). We denote
by $\dot{\rm Aut}(\Gamma)$ all automorphisms of the pointed graph
$\Gamma$ fixing the vertex $f$. Let $\Gamma_-$ denote the subgraph
of $\Gamma$ obtained by removing the distinguished vertex $f$ from
$\Gamma$ and $A(\Gamma_-)$ its adjacency matrix.

The set of pointed semistable and stable graphs will be denoted by
$\dot{\mathcal G}^{ss}$ and $\dot{\mathcal G}$ respectively. For any
$\Gamma\in \dot{\mathcal G}^{ss}$, its weight $w(\Gamma)$ is defined
to be the integer $|E|-|V|$. We denote by $\dot{\mathcal G}^{ss}(k)$
and $\dot{\mathcal G}(k)$ respectively the set of pointed semistable
and pointed stable digraphs with weight $k$. Let $\dot{\mathcal
G}_{con}(k)$ ($\dot{\mathcal
G}_{con}^{ss}(k)$) and $\dot{\mathcal G}_{scon}(k)$ ($\dot{\mathcal G}_{scon}^{ss}(k)$) respectively be the
set of connected and strongly connected graphs in $\dot{\mathcal
G}(k)$ ($\dot{\mathcal
G}^{ss}(k)$). We also define a special set of graphs:
$$\dot\Lambda(k)=\{\Gamma\in\dot{\mathcal G}_{scon}(k)\mid 1 \text{ is not an
eigenvalue of } A(\Gamma_-)\}.$$ The cardinality of this set is the
number of terms in $Q_k$ by Theorem \ref{engq}.
We have computed the cardinalities of these sets when $k\leq5$ in
Table \ref{tb1}.
\end{definition}
\begin{table}[h]
\caption{Numbers of pointed stable graphs} \label{tb1}
\begin{tabular}{|c||c|c|c|c|c|c|}
\hline
    $k$                                            &$0$&$1$&$2$&$3$ &$4$  &$5$
\\\hline           $|\dot{\mathcal G}(k)|$         &$1$&$2$&$9$&$46$&$314$&$2638$
\\\hline       $|\dot{\mathcal G}_{con}(k)|$       &$1$&$1$&$4$&$23$&$178$&$1637$
\\\hline       $|\dot{\mathcal G}_{scon}(k)|$      &$1$&$1$&$2$&$9$ &$61$ &$538$
\\\hline       $|\dot\Lambda(k)|$                  &$1$&$1$&$1$&$5$ &$36$ &$331$
\\\hline
\end{tabular}
\end{table}

\begin{remark} \label{rm1}
Note that $\mathcal G^{ss}$ ($\mathcal G$) may be regarded as a
subset of $\dot{\mathcal G}^{ss}$ ($\dot{\mathcal G}$) where the
distinguished vertex $f$ is an isolated vertex without loops.
\end{remark}

\begin{remark} \label{rm4}
For later use, we may extend the above definition to semistable (stable) graphs with $m$ distinguished vertices
$\Gamma=(V\cup\{f_1,\dots,f_m\},E)$. We denote by $\tilde{\mathcal G}$ ($\tilde{\mathcal G}^{ss}$) the set of
semistable (stable) graphs with any number of distinguished vertices. The automorphism of a graph
$\Gamma\in\tilde{\mathcal G}$ ($\Gamma\in\tilde{\mathcal G}^{ss}$) is
always assumed to fix each distinguished vertex and its automorphism group is simply denoted by ${\rm Aut}(\Gamma)$.
\end{remark}

The concept of {\it Weyl invariants} was introduced by Fefferman
\cite{Fef2}. We slightly extend the definition to allow additional
functions. Consider the tensor products of covariant derivatives of
the curvature tensor $R_{i\bar j k \bar l; p\cdots \bar q}$ and a
function $f$, e.g.
$$R_{ijk \bar l;p \bar q}\otimes\cdots \otimes  R_{a \bar b c \bar d; \bar
e}\otimes f_{;r\bar st}.$$ The Weyl invariants are constructed by
first pairing up the unbarred indices to barred indices and then
contracting all paired indices.

As remarked in \cite{Xu}, we can write Weyl invariants as
polynomials of $g_{i\bar j \alpha}$ and $f$ and their derivatives.
The advantage is that we do not need to deal with the problem of
exchanging indices, thus we can canonically represent a Weyl
invariant as a sum over pointed stable graphs, from which we can
easily recover its curvature-tensor expression.

We represent a digraph as a weighted digraph.  The weight of a
directed edge is the number of multi-edges. The number attached to a
vertex denotes the number of its self-loops. A vertex without loops
will be denoted by a small hollow circle $\circ$. The distinguished
vertex $f$ is denoted by a solid circle $\bullet$. We will drop it
if $f$ is an isolated vertex without loops.

The Weyl invariant $g_{i \bar i k \bar l p} g_{j \bar j l \bar k
\bar q} f_{q\bar p}$ is depicted in Figure \ref{figweyl} knowing
that $(i,\bar i),\,(j, \bar j)$ etc. are paired indices to be
contracted.
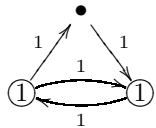
\begin{figure}[h]
$\xymatrix@C=4mm@R=7mm{
                & \bullet  \ar[dr]^{1}             \\
         *+[o][F-]{1} \ar[ur]^{1} \ar@/^0.4pc/[rr]^{1} & &  *+[o][F-]{1}  \ar@/^0.4pc/[ll]^{1}
         }
$ \caption{The associated graph of $g_{i \bar i k \bar l p} g_{j
\bar j l \bar k \bar q} f_{q\bar p}$} \label{figweyl}
\end{figure}

Below, we may use the same notation to denote a graph and its
associated Weyl invariant.

Therefore we can write $R_k f(x)$ in \eqref{eqeng} and $Q_k f(x)$ in
\eqref{eqber3} as a weighted sum of pointed semistable graphs in
$\dot{\mathcal G}^{ss}(k)$.
\begin{equation} \label{eqengrq}
R_k f=\sum_{\Gamma\in  \dot{\mathcal G}^{ss}(k)}R_{\Gamma}\Gamma,
\qquad Q_k f=\sum_{\Gamma\in  \dot{\mathcal
G}^{ss}(k)}Q_{\Gamma}\Gamma.
\end{equation}

Note that at the center of the normal coordinate, terms of
non-stable graphs vanish. Moreover, to recover
$R_{\Gamma},Q_{\Gamma}$ for all semistable pointed graphs
$\Gamma\in\dot{\mathcal G}^{ss}$, it is enough to know only their
values for stable pointed graphs $\Gamma\in\dot{\mathcal G}$. We
will prove closed formulas of $R_{\Gamma},Q_{\Gamma}$ in Section
\ref{berezin}

\begin{definition} \label{partition}
Given $\Gamma\in\dot{\mathcal G}^{ss}$, the {\it partition function}
$D_{\Gamma}(f_1,f_2)$ is defined to be a Weyl invariant generated
from $\Gamma$ by replacing the vertex $f$ in with two vertices $f_1$
and $f_2$, such that all inward edges of $f$ are connected to $f_1$
and all outward edges of $f$ are connected to $f_2$.
\end{definition}

For example, if $\Gamma$ is the graph in Figure \ref{figweyl}, then
\begin{equation}
D_{\Gamma}(f_1,f_2)=g_{i \bar i k \bar l p} g_{j \bar j l \bar k
\bar q} \partial_{\bar p} f_1 \partial_q f_2.
\end{equation}

Let $\Gamma\in\dot{\mathcal G}^{ss}$ and $H\in \mathcal G^{ss}$, we
define $D_{\Gamma}(H)$ to be the Weyl invariant generated by
replacing $f$ in $\Gamma$ with $H$. For example, if $\Gamma$ is the
graph in Figure \ref{figweyl}, then
\begin{equation}
D_{\Gamma}(H)=g_{i \bar i k \bar l p} g_{j \bar j l \bar k \bar q}
\partial_{q\bar p} H.
\end{equation}
The derivatives of $H$ in the right-hand side may be expanded to get
a sum of semistable graphs.

\begin{remark}\label{rm3}
Given $\Gamma\in\dot{\mathcal G}$ and two Weyl invariants with
associated graphs $H_1,H_2\in\mathcal G^{ss}$, we
define $D_{\Gamma}(H_1,H_2)$ to be sum of stable graphs generated from $D_{\Gamma}(f_1,f_2)$
by replacing $f_1,f_2$ with $H_1,H_2$. We emphasize that in this definition, we discard all semistable but not stable graphs.
In particular, we only need to consider $\Gamma$ as acting on vertices of $H_1,H_2$ but not on edges, since in the latter
case, there will appear semistable but not stable vertices (cf. \cite[\S 2]{Xu}). Note that $D_{\Gamma}(H_1,H_2)$ may be
defined for $H_1,H_2\in\tilde{\mathcal G}^{ss}$ similarly.
\end{remark}

Let $\mathscr L$ be the set of digraphs consisting of a finite
number of vertex-disjoint simple cycles (i.e. simple polygons
without common vertex). The length of a simple cycle is defined to
be the number of its edges. For each graph $L\in \mathscr L$, we can
write $L$ as a finite increasing sequence of nonnegative integers
$[i_1,\dots, i_m]$, meaning $L$ consists of $m$ disjoint simple
cycles, whose lengths are specified by $i_1,\dots,i_m$. We define
the index of $L$ to be
\begin{equation}
i(L)=m+i_1+\dots+i_m.
\end{equation}
Note that $[0]$ is just a single vertex and $[1]$ is a vertex with a
self-loop. If $0\notin L$, then $L$ is usually called a linear
digraph. Recall that a {\it linear digraph} is a digraph in which
$\deg^+(v)=\deg^-(v)=1$ for each vertex $v$.

Given a set of indices ${\alpha_1,\dots,\alpha_r}$, denote by
$\mathscr L(\alpha_1,\dots,\alpha_r)$ the isomorphism classes of all
possible decorations of the vertices of $L\in \mathscr L$ with the
half-edges ${\alpha_1,\dots,\alpha_r}$ requiring each vertex to be
semistable. Two decorations of $L$ that differ by a graph
isomorphism are considered the same.

In \cite{Xu}, we proved the following lemma, which explains the
graphical properties of the partial derivatives of $\det g$.
\begin{lemma} \label{dettree}
We have
\begin{equation} \label{eqdettree}
\frac{1}{\det g}\partial_{\alpha_1}\dots\partial_{\alpha_r}\det g
=\sum_{\substack{L\in \mathscr L(\alpha_1,\dots,\alpha_r)\\0\notin
L}} (-1)^{i(L)} \cdot L.
\end{equation}
\end{lemma}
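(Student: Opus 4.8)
The plan is to reduce the identity to two classical combinatorial facts—the exponential (Fa\`a di Bruno) formula for $e^{-u}\partial^{\,r}e^{u}$ and a ``necklace'' expansion of the derivatives of $u=\log\det g$—and then to read off the graph on the right-hand side of \eqref{eqdettree} from the resulting cyclic structures. Concretely, I would first set $u=\log\det g=\operatorname{tr}\log g$, so that $\det g=e^{u}$, and apply the exponential formula
\[
\frac{1}{\det g}\,\partial_{\alpha_1}\cdots\partial_{\alpha_r}\det g
=\sum_{\pi}\ \prod_{B\in\pi}\partial_B u ,
\]
where $\pi$ ranges over the set partitions of $\{\alpha_1,\dots,\alpha_r\}$ and $\partial_B=\prod_{\alpha\in B}\partial_\alpha$. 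Each block $B$ of $\pi$ will become one connected component (one simple cycle) of $L$, and the nonemptiness of blocks will force semistability below.

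Second, I would establish by induction on $|B|$ the expansion
\[
\partial_{\beta_1}\cdots\partial_{\beta_s} u
=\sum_{\{C_1,\dots,C_k\}}(-1)^{k-1}\,
\operatorname{tr}\!\big(g^{-1}g_{C_1}\,g^{-1}g_{C_2}\cdots g^{-1}g_{C_k}\big),
\]
the sum running over the cyclically ordered set partitions $\{C_1,\dots,C_k\}$ of $\{\beta_1,\dots,\beta_s\}$, with $g_C=\partial_C g$. The base case is Jacobi's formula $\partial_\beta u=\operatorname{tr}(g^{-1}g_\beta)$, and the inductive step uses $\partial_\gamma(g^{-1})=-g^{-1}g_\gamma g^{-1}$: differentiating a trace, $\partial_\gamma$ either lands on one of the $g^{-1}$ factors, inserting a new vertex $\{\gamma\}$ into the cycle (raising $k$ by one and flipping the sign, matching $(-1)^{k-1}\mapsto(-1)^{k}$), or lands on some $g_{C_i}$, enlarging that block to $C_i\cup\{\gamma\}$. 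One checks that this recursion produces every cyclically ordered partition exactly once.

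Third, I would translate each trace into a digraph: every factor $g^{-1}$ is an edge and every factor $g_{C_i}$ is a vertex decorated by the index block $C_i$, so the trace is a single directed simple cycle of length $k$ and the product over the blocks of $\pi$ is a vertex-disjoint union of such cycles, i.e. an $L\in\mathscr L$. A vertex carries its two incident cycle-edges (hence $\deg^-,\deg^+\ge1$) together with the $|C_i|\ge1$ half-edges of its block, so $\deg^-+\deg^+\ge3$ and the vertex is semistable; since every vertex lies on a cycle, $0\notin L$, and conversely every such decorated $L$ arises. Because the indices $\alpha_1,\dots,\alpha_r$ are pairwise distinct, a decorated $L$ has trivial automorphism group, and the cyclic invariance of the trace is precisely what makes cyclically ordered partitions (counted up to rotation) match directed cycles (up to rotation); hence each isomorphism class $L$ occurs with coefficient exactly $1$. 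Finally the sign is multiplicative over cycles: a cycle of length $i_j$ contributes $(-1)^{i_j-1}$, so $L=[i_1,\dots,i_m]$ carries the sign $\prod_j(-1)^{i_j-1}=(-1)^{\sum_j i_j-m}=(-1)^{\,m+i_1+\dots+i_m}=(-1)^{i(L)}$, which is \eqref{eqdettree}.

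The main obstacle is the second step: proving the necklace expansion with the correct signs and, above all, showing that each cyclically ordered partition is produced exactly once by the induction, so that no spurious symmetry factor survives. Getting the cyclic-versus-linear ordering bookkeeping right—so that the cyclic invariance of the trace is neither double counted nor undercounted—is the delicate point; once it is settled, the first and third steps are formal.
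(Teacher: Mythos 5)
Your proposal is correct; note, however, that this paper does not actually prove Lemma \ref{dettree} --- it is quoted from the earlier work \cite{Xu} --- so the comparison below is with the natural direct argument rather than with an in-paper proof. The standard route is a single induction on $r$ using exactly the two identities you invoke, Jacobi's formula $\partial_\alpha\det g=\det g\,\operatorname{tr}(g^{-1}\partial_\alpha g)$ and $\partial_\gamma(g^{-1})=-g^{-1}g_\gamma g^{-1}$, applied directly to decorated graphs: writing $F_r=\frac{1}{\det g}\partial_{\alpha_1}\cdots\partial_{\alpha_r}\det g$, one has $F_{r+1}=\partial_{\alpha_{r+1}}F_r+\operatorname{tr}(g^{-1}g_{\alpha_{r+1}})F_r$, and the three ways the new index can enter a graph $L$ (onto a vertex, keeping $i(L)$ fixed; into an edge, raising $i(L)$ by $1$ and flipping the sign via $\partial(g^{-1})$; as a new disjoint self-loop, raising $i(L)$ by $2$) produce every element of $\mathscr L(\alpha_1,\dots,\alpha_{r+1})$ with $0\notin L$ exactly once. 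Your proof reorganizes this into two classical facts: the exponential formula over set partitions of $\{\alpha_1,\dots,\alpha_r\}$, which accounts for the disjoint-union-of-cycles structure, and the necklace expansion of $\partial_{\beta_1}\cdots\partial_{\beta_s}\operatorname{tr}\log g$, which accounts for a single cycle. This is a genuinely different decomposition, and it buys a cleaner conceptual separation of the two structural features of $L$, at the price of the cyclic-order bookkeeping you flag as the delicate point; the direct induction avoids cyclic orderings entirely because insertion positions are edges of an already-built graph. Your resolution of that point is sound: the blocks of a partition are pairwise distinct nonempty sets, so a cyclic sequence of $k$ blocks has exactly $k$ linear representatives (no rotational symmetry) and a decorated $L$ has trivial automorphism group, whence each isomorphism class occurs with coefficient exactly $1$; the insertion-versus-enlargement dichotomy in your induction on $s$ is indeed a bijection onto cyclically ordered partitions of the enlarged index set; and the sign count $\prod_j(-1)^{i_j-1}=(-1)^{i(L)}$ matches the definition of the index.
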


We need the following coefficient theorem (see Theorem 1.2 in
\cite{CDS}) from the spectral graph theory.
\begin{theorem} \label{cds}
Let $P_G (\lambda) = \lambda^n + c_1\lambda^{n-1} + \dots + c_n$
 be
the characteristic polynomial of a digraph $G$ with $n$ vertices.
Then for each $i = 1,\dots,n$,
$$c_i = \sum_{L\in\mathcal L_i} (-1)^{p(L)}, $$ where $\mathcal L_i$ is the set of all
linear directed subgraphs $L$ of $G$ with exactly $i$ vertices;
$p(L)$ denotes the number of components of $L$.
\end{theorem}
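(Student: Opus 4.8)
The plan is to read off the coefficients $c_i$ directly from the determinantal expansion of the characteristic polynomial $P_G(\lambda)=\det(\lambda I-A)$ and to match the surviving terms with linear directed subgraphs. First I would recall the standard identity expressing the coefficients of a characteristic polynomial through principal minors:
\[
\det(\lambda I-A)=\sum_{i=0}^{n}(-1)^{i}\Big(\sum_{\substack{S\subseteq V\\ |S|=i}}\det A[S]\Big)\lambda^{n-i},
\]
where $A[S]$ denotes the principal submatrix of $A$ on the index set $S$. Comparing with $P_G(\lambda)=\lambda^{n}+c_1\lambda^{n-1}+\dots+c_n$ gives $c_i=(-1)^{i}\sum_{|S|=i}\det A[S]$, so it suffices to give a combinatorial reading of each principal minor.

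Next I would expand each minor by the Leibniz formula, $\det A[S]=\sum_{\sigma}\operatorname{sgn}(\sigma)\prod_{v\in S}A_{v,\sigma(v)}$, the sum running over all permutations $\sigma$ of $S$. A term survives only when $A_{v,\sigma(v)}\neq 0$ for every $v\in S$, i.e.\ when $G$ contains a directed edge from $v$ to $\sigma(v)$ for each $v$. Decomposing $\sigma$ into disjoint cycles, a surviving term is exactly a linear directed subgraph $L$ on the vertex set $S$, namely a vertex-disjoint union of directed cycles (fixed points of $\sigma$ corresponding to self-loops) in which every vertex has in- and out-degree $1$. The product $\prod_{v}A_{v,\sigma(v)}$ records the number of ways to select the actual (multi-)edges realizing this cycle structure, so $\det A[S]$ is a signed count of such linear subgraphs weighted by edge multiplicities; equivalently one regards $\mathcal{L}_i$ as listing these subgraphs with multiplicity.

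The one point requiring care is the sign. For a permutation $\sigma$ of the $i$-element set $S$ whose disjoint-cycle decomposition has $p(L)$ cycles one has $\operatorname{sgn}(\sigma)=(-1)^{i-p(L)}$, since a cycle of length $\ell$ contributes sign $(-1)^{\ell-1}$ and the lengths sum to $i$. Substituting back, each linear subgraph $L\in\mathcal{L}_i$ contributes
\[
(-1)^{i}\cdot(-1)^{i-p(L)}=(-1)^{2i-p(L)}=(-1)^{p(L)}
\]
to $c_i$, and summing over all $S$ with $|S|=i$ collects precisely the subgraphs of $\mathcal{L}_i$. This yields $c_i=\sum_{L\in\mathcal{L}_i}(-1)^{p(L)}$, as claimed.

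The main obstacle here is bookkeeping rather than conceptual: one must verify that the surviving permutations, taken over all $S$ with $|S|=i$, correspond bijectively to the linear directed subgraphs counted in $\mathcal{L}_i$ (with the multi-edge weighting above), and check that the parity computation $\operatorname{sgn}(\sigma)=(-1)^{i-p(L)}$ holds uniformly regardless of the individual cycle lengths. Once these two points are confirmed, the formula follows purely from the determinant expansion.
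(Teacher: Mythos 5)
Your proof is correct, but note that the paper itself contains no proof of this statement: Theorem \ref{cds} is quoted as a known result from spectral graph theory (Theorem 1.2 of \cite{CDS}), so there is no internal argument to compare against; you have supplied a self-contained proof of a cited fact. Your argument is the standard one and matches the classical proof: expand $\det(\lambda I - A)$ in principal minors to get $c_i = (-1)^i \sum_{|S|=i} \det A[S]$, apply the Leibniz formula to each minor, identify surviving permutation terms with linear directed subgraphs on $S$, and compute the sign as $\operatorname{sgn}(\sigma) = (-1)^{i-p(L)}$, which combines with $(-1)^i$ to give $(-1)^{p(L)}$. The two bookkeeping points you flag do go through: since the paper allows multigraphs ($E$ is a multiset of ordered pairs), parallel edges realizing the same cycle structure give \emph{distinct} elements of $\mathcal{L}_i$, and the factor $\prod_{v} A_{v,\sigma(v)}$ counts exactly these realizations, each with the common sign $(-1)^{i-p(L)}$; and the parity computation is uniform in the cycle lengths because a cycle of length $\ell$ has sign $(-1)^{\ell-1}$ and the lengths sum to $i$, independently of how $i$ is partitioned. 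One small caveat worth stating explicitly if this were to be inserted as a proof: fixed points of $\sigma$ survive only when $A_{vv}\neq 0$, i.e.\ they correspond to self-loops, which is consistent with the paper's convention that a length-one cycle is a vertex with a loop.
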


The following lemma and remark will be used in \S \ref{berezin}.
\begin{lemma} \label{graph2}
Let $\Gamma\in\dot{\mathcal G}$ and $H_1,H_2\in \mathcal G^{ss}$. (cf. Remark \ref{rm3}
for the definition of $D_{\Gamma}(H_1,H_2)$)
Then
\begin{equation}
\frac{1}{|\dot{\rm Aut}(\Gamma)| |{\rm
Aut}(H_1)| |{\rm
Aut}(H_2)|}D_{\Gamma}(H_1,H_2)=\sum_{G}\frac{1}{|{\rm Aut}(G)|}G,
\end{equation}
where $G$ in the summation runs over isomorphism classes of graphs appearing in the
expansion of $D_{\Gamma}(H_1,H_2)$.
\end{lemma}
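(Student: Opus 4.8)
The plan is to reduce the identity to a single counting statement about multiplicities. Writing the expansion as $D_{\Gamma}(H_1,H_2)=\sum_{G} m_{G}\,G$, a sum over pairwise non-isomorphic stable graphs, the lemma is equivalent to the coefficient identity $m_{G}\,|{\rm Aut}(G)|=|\dot{\rm Aut}(\Gamma)|\,|{\rm Aut}(H_1)|\,|{\rm Aut}(H_2)|$ for every $G$ occurring, after which both sides are compared class by class, the right-hand side contributing $1/|{\rm Aut}(G)|$ to each $G$. Here $m_{G}$ is the number of distributions of the half-edges at the distinguished vertex $f$ — equivalently the Leibniz terms produced when $\Gamma$ acts on the vertices of $H_1,H_2$ as in Definition \ref{partition} — whose associated graph is isomorphic to $G$.

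First I would fix a rigid representative $\hat G$ of each class $G$ and count in two ways the set $\mathcal P$ of pairs $(\phi,\sigma)$, where $\phi$ is such a distribution and $\sigma\colon G_\phi\to\hat G$ is an isomorphism of graphs (tracking both vertices and edges, so that parallel edges and any loops created at $f$ are respected). Grouping by $\phi$ gives $|\mathcal P|=m_{G}\,|{\rm Aut}(G)|$, since the isomorphisms onto $\hat G$ form a torsor under ${\rm Aut}(\hat G)$. On the other hand, each pair is the same datum as a decomposition of $\hat G$ into full subgraphs isomorphic to $\Gamma_-,H_1,H_2$ together with chosen identifications with $\Gamma_-,H_1,H_2$; the group $K:=\dot{\rm Aut}(\Gamma)\times{\rm Aut}(H_1)\times{\rm Aut}(H_2)$ acts on these tilings by precomposing the identifications, and the edge bookkeeping makes the action free, so $|\mathcal P|=|K|\cdot(\#\,K\text{-orbits})$. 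Thus the coefficient identity is equivalent to the assertion that there is exactly one orbit.

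The single-orbit statement is the crux and is where the hypotheses $\Gamma\in\dot{\mathcal G}$ and $H_i\in\mathcal G^{ss}$ are used: it says that each stable $G$ in the expansion admits a \emph{unique} decomposition as a $\Gamma$-substitution of $(H_1,H_2)$, equivalently that no automorphism of $G$ sends a vertex of one of the parts $\Gamma_-,H_1,H_2$ into another. This is exactly the main obstacle, since all vertices are of the same curvature type and are a priori interchangeable; indeed, dropping stability one can build graphs admitting two genuinely different decompositions, for which the identity fails. To rule this out I would exploit the directed layering forced by Definition \ref{partition}: every cross-edge runs $H_2\to\Gamma_-\to H_1$ (with the extra edges $H_2\to H_1$ coming from a loop at $f$), so a vertex of $H_1$ sends all of its outgoing edges inside $H_1$, whereas stability forces each ordinary vertex to have in- and out-degree at least two; propagating these degree constraints along the layering should pin down each part, and hence the decomposition. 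Once uniqueness is proved the orbit count collapses to a single orbit, the coefficient identity follows, and with it the lemma. (Alternatively, if the one-distinguished-vertex case of this identity is taken from \cite{Xu}, one may substitute $H_1$ and $H_2$ one at a time and invoke compatibility of the $1/|{\rm Aut}|$-weighting with composition, which repackages the same combinatorics.)
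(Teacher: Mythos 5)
Your counting framework is correct, and it is essentially the paper's own proof in different packaging: the paper lets $K=\dot{\rm Aut}(\Gamma)\times{\rm Aut}(H_1)\times{\rm Aut}(H_2)$ act on the terms of the expansion and invokes orbit--stabilizer, asserting that the orbits are the isomorphism classes and that the isotropy group of a term is all of ${\rm Aut}(G)$; your double count $|\mathcal P|=m_G|{\rm Aut}(\hat G)|=|K|\cdot(\#\text{orbits})$ is an equivalent reformulation, and your reduction to the single-orbit (unique decomposition) statement isolates precisely the assertion that the paper waves through with ``it is not difficult to see''. Up to that point your proposal is sound and faithful to the paper.

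The genuine gap is the single-orbit statement itself: you only sketch it (``propagating these degree constraints \dots should pin down each part''), and no completion of that sketch can exist, because the statement --- and with it the lemma as literally stated --- fails under the stated hypotheses; in particular your parenthetical claim that counterexamples require dropping stability is wrong. Take $\Gamma\in\dot{\mathcal G}$ in which the distinguished vertex $f$ carries one loop and one outgoing edge to a single ordinary vertex $w$ that carries two loops, and let $H_1=H_2$ be the one-vertex graph with two loops. Then $D_\Gamma(H_1,H_2)$ is a single graph $G$ with vertices $u,v,w$ (where $u$ comes from $H_2$, $v$ from $H_1$), two loops at each vertex, and edges $u\to v$, $u\to w$; it is connected and every vertex is stable. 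Here $m_G=1$ and $|K|=2\cdot 2\cdot 2=8$, but $|{\rm Aut}(G)|=2^3\cdot 2=16$ (swap the loop pairs, and swap $v\leftrightarrow w$ together with the two cross-edges), so the claimed identity reads $1/8=1/16$. The failure is exactly non-uniqueness of the decomposition: since $\Gamma$ has no edge from $\Gamma_-$ into $f$, the $H_1$-copy $v$ and the $\Gamma_-$-copy $w$ have identical directed neighborhoods and are interchangeable, so no layering or degree propagation can separate them --- your proposed argument breaks at exactly this point. (A disconnected example is even quicker: $\Gamma=f\sqcup w$ with no edges at $f$ gives three interchangeable two-loop vertices and $1/8\neq 1/48$.) Note that the same examples defeat the paper's own one-line justification, since there the isotropy group is $K$ and not ${\rm Aut}(G)$; so what your analysis really exposes is that the lemma needs a uniqueness-of-decomposition hypothesis, or its right-hand side must be read as a sum over $K$-orbits weighted by their stabilizers. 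Finally, your fallback of quoting \cite{Xu} plus ``compatibility of the $1/|{\rm Aut}|$-weighting with composition'' is circular: that compatibility is precisely the content of the present lemma.
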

\begin{proof}
Note that the group $\dot{\rm Aut}(\Gamma)\times {\rm Aut}(H_1)\times {\rm Aut}(H_2)$
has a natural action on the set
of all graphs in the expansion of $D_{\Gamma}(H_1,H_2)$. Then it is not
difficult to see that the set of orbits corresponds to isomorphism
classes of graphs and the isotropy group at a graph $G$ is just
${\rm Aut}(G)$. So we get the desired equation.
\end{proof}
\begin{remark} \label{rm2}
Let $\Gamma\in\dot{\mathcal G}$ be a (one-)pointed stable graph
and $H_1,H_2\in \tilde{\mathcal G}^{ss}$ semistable graphs with (any number of) distinguished vertices. (cf. Remark \ref{rm4}).
Then the above lemma still
holds without any change.
\begin{equation}
\frac{1}{|\dot{\rm Aut}(\Gamma)| |{\rm
Aut}(H_1)| |{\rm
Aut}(H_2)|}D_{\Gamma}(H_1,H_2)=\sum_{G}\frac{1}{|{\rm Aut}(G)|}G,
\end{equation}
where $G$ in the summation runs over isomorphism classes of graphs appearing in the
expansion of $D_{\Gamma}(H_1,H_2)$. Note that if $H_1,H_2$ has $m_1,m_2$ distinguished vertices
respectively, then $G$ has $m_1+m_2$ distinguished vertices.
\end{remark}

Finally, we record two identities that can convert covariant derivatives of curvature tensors to partial derivatives
of metrics and vice versa.
\begin{gather}\label{eqcur1}
R_{i\bar jk\bar l} =-g_{i\bar j k\bar l}+g^{m\bar p}g_{m\bar j\bar
l}g_{ik\bar p},\\
\label{eqcur2}
T_{\beta_1\dots\beta_q;\gamma}^{\alpha_1\dots\alpha_p}=\partial_{\gamma}T_{\beta_1\dots\beta_q}^{\alpha_1\dots\alpha_p}-\sum_{i=1}^q
\Gamma_{\gamma\beta_i}^{\delta}T_{\beta_1\dots\beta_{i-1}\delta\beta_{i+1}\dots\beta_q}^{\alpha_1\dots\alpha_p}
+\sum_{j=1}^q\Gamma_{\delta\gamma}^{\alpha_j}T_{\beta_1\dots\beta_q}^{\alpha_1\dots\alpha_{j-1}\delta\alpha_{j+1}\dots\alpha_p}.
\end{gather}
The second equation gives the formula for covariant derivatives of a tensor field
$T_{\beta_1\dots\beta_q}^{\alpha_1\dots\alpha_p}$, where the Christoffel symbols $\Gamma_{\beta\gamma}^\alpha=0$ except
for $
\Gamma_{jk}^i=g^{i\bar l}g_{j\bar l k},\, \Gamma_{\bar j\bar
k}^{\bar i}=g^{l\bar i}g_{l\bar j\bar k}$.

\vskip 30pt
\section{The asymptotic expansion of the Berezin transform} \label{berezin}

We may write \eqref{eqber} as
\begin{equation} \label{eqber2}
\sum^\infty_{k=0} B_k(x) \alpha^{n-k} I_\alpha f(x)=\int_\Omega f(y)
|K_\alpha(x,y)|^2 e^{-\alpha \Phi(x)-\alpha
\Phi(y)}\frac{w_g^n(y)}{n!}.
\end{equation}

By \eqref{eqb1}, we have that for $(x,y)$ near the diagonal,
\begin{equation} \label{eqb2}
|K_\alpha (x,y)|^2 e^{-\alpha\Phi(x)-\alpha \Phi (y)}=e^{-\alpha
D(x,y)} \sum^\infty_{k=0} \sum_{i=0}^k B_{i}(x,y)B_{k-i}(y,x)
\alpha^{n-k}.
\end{equation}

We need the following important result.
\begin{theorem}{\rm(Engli\v{s})} \label{eng}
If the Laplace integral
$$\int_{\Omega} f(y)e^{-mD(x,y)}\frac{\omega_g^n(y)}{n!}$$
exists for some $m=m_0$, then it also exists for all $m>m_0$.
Moreover, it has an asymptotic expansion for $m\rightarrow\infty$,
\begin{equation}
\int_{\Omega} f(y)e^{-mD(x,y)}\frac{\omega_g^n(y)}{n!} \sim
\frac{1}{m^n}\sum_{j\geq0}m^{-j}R_j(f)(x),
\end{equation}
where $R_j: C^\infty(\Omega)\rightarrow C^\infty(\Omega)$ are
explicit differential operators defined by
\begin{equation} \label{eqeng}
R_j f(x)=\frac{1}{\det g}\sum_{k=j}^{3j}\frac{1}{k!(k-j)!}L^k(f\det
g S^{k-j})|_{y=x},
\end{equation}
where $L$ is the (constant-coefficient) differential operator
$$L f(y)=g^{i\bar j}(x)\partial_i\partial_{\bar j} f(y)$$
and the function $S(x,y)$ satisfies $$S=\partial_\alpha
S=\partial_{\alpha\beta}S=\partial_{i_1 i_2\dots
i_m}S=\partial_{\bar i_1\bar i_2\dots \bar i_m}S=0 \quad \text{ at }
y=x,$$
$$\partial_{i\bar j\alpha_1\alpha_2\dots
\alpha_m}S|_{y=x}=-\partial_{\alpha_1\alpha_2\dots \alpha_m}g_{i\bar
j}(x).$$
\end{theorem}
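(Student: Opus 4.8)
I would prove this by the complex Laplace (Gaussian/steepest-descent) method, viewing $y=x$ as the unique nondegenerate minimum of $D(x,\cdot)$. The existence claim is the easy part and follows from $D\geq0$: on the implicitly fixed small domain $\Omega$ the diastasis is nonnegative, so for $m>m_0$ we have $e^{-mD}\le e^{-m_0D}$ pointwise, whence $|f|e^{-mD}\le|f|e^{-m_0D}$ is integrable and the integral converges. For the expansion I would first localize: since $D(x,y)>0$ for $y\neq x$, on the complement of a small coordinate ball around $x$ one has $D\geq\delta>0$, so that region contributes $O(e^{-\delta m})$, negligible against every power of $m^{-1}$; on the ball the Hessian is nondegenerate and $D\geq c|u|^2$ with $u=y-x$.

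On the ball I would separate the quadratic part of the diastasis from its higher corrections. Set $Q(x,y)=g_{i\bar j}(x)u^i\bar u^j$ and $S:=Q-D$. The known structure of the Calabi diastasis — only mixed (unbarred/barred) terms occur in its Taylor expansion, its complex Hessian at the diagonal is the metric, and $\partial_{y^i}\partial_{\bar y^j}D(x,y)=g_{i\bar j}(y)$ up to terms vanishing to infinite order coming from the almost-analytic extension — shows that $S$ vanishes together with its pure and its second derivatives at $y=x$ and that $\partial_{i\bar j\alpha_1\cdots\alpha_m}S|_{y=x}=-\partial_{\alpha_1\cdots\alpha_m}g_{i\bar j}(x)$. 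These are exactly the stated defining relations for $S$; in particular $S=O(|u|^3)$. Writing $e^{-mD}=e^{-mQ}\,e^{mS}$ and setting $P(u):=f(y)\det g(y)$, the localized integral becomes $\pi^{-n}\int P(u)\,e^{mS}\,e^{-mQ}\,dV_{\mathrm{Leb}}$, the factor $\pi^{-n}$ arising from $\omega_g^n/n!=\pi^{-n}\det g\,dV_{\mathrm{Leb}}$.

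The computational core is the Gaussian moment identity: for a polynomial $R(u,\bar u)$ one has $\int R\,e^{-mQ}\,dV_{\mathrm{Leb}}=\frac{\pi^n}{m^n\det g(x)}\big[\exp(m^{-1}L)R\big]_{u=0}$, where $L=g^{i\bar j}(x)\partial_i\partial_{\bar j}$ is the inverse covariance (Wick's theorem, the second moment being $m^{-1}g^{i\bar j}$, verified directly on monomials by comparing pairings; the powers of $\pi$ cancel). Applying this with $R=P\,e^{mS}$ and expanding $\exp(m^{-1}L)=\sum_k\frac{m^{-k}}{k!}L^k$ and $e^{mS}=\sum_l\frac{m^l}{l!}S^l$ produces $\frac{1}{m^n}\sum_{k,l}\frac{m^{\,l-k}}{k!\,l!}\frac{1}{\det g(x)}L^k\big[f\det g\,S^l\big]_{y=x}$. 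Collecting the coefficient of $m^{-n-j}$ by putting $l=k-j$ reproduces the claimed $R_jf$ (and $R_0=\mathrm{id}$ as a sanity check, since only $k=l=0$ survives). The range $j\le k\le 3j$ drops out of degree counting: $L^k[\cdot]|_0$ is nonzero only on a bidegree-$(k,k)$ term, and since $S=O(|u|^3)$ the factor $S^{k-j}$ forces $2k\ge 3(k-j)$, i.e. $k\le 3j$, while $l=k-j\ge0$ gives $k\ge j$. The same bookkeeping shows that increasing $l$ strictly lowers the order in $m$, which is what makes the formally $m$-growing factor $e^{mS}$ yield a genuine asymptotic series; and since $L^k$ together with evaluation at $x$ is manifestly a differential operator in $f$, each $R_j$ is an explicit differential operator.

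The main obstacle is not the formal matching but the rigorous justification that the truncated expansion is asymptotic. I would Taylor-expand $f\det g$ and $S$ (not $e^{mS}$) to finite order, integrate the resulting polynomial parts exactly against the full Gaussian on $\mathbb{C}^n$ (extending the ball at the cost of an exponentially small error), and estimate the remainder uniformly in $x$ on compacta, controlling the finitely many derivatives of $f,\det g,S$ that enter and dominating the Gaussian tails by $D\geq c|u|^2$; the localization error is absorbed into the same $O(e^{-\delta m})$ bound. The almost-analyticity of $\Phi$ enters only through infinite-order-vanishing terms at $y=x$, which do not affect any Taylor coefficient there and hence do not appear in the operators $R_j$.
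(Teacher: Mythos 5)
Your proposal is essentially correct, but there is nothing in the paper to compare it against: Theorem \ref{eng} is quoted as a known result of Engli\v{s} \cite{Eng}, and the paper gives no proof of it (its own contributions begin with the combinatorial reformulation in Theorem \ref{engr}). What you have written is in substance a reconstruction of Engli\v{s}' original steepest-descent argument from the cited reference: localization off the diagonal using $D\geq 0$ and $D>0$ for $y\neq x$, the splitting $D=Q-S$ with $Q=g_{i\bar j}(x)u^i\bar u^j$, the complex Gaussian moment identity $\int R\,e^{-mQ}\,dV=\frac{\pi^n}{m^n\det g(x)}\bigl[e^{L/m}R\bigr]_{u=0}$, and the bookkeeping $l=k-j$ with the bidegree count $2k\geq 3(k-j)$ that produces exactly the range $j\leq k\leq 3j$ in \eqref{eqeng}. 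Your verification that $S:=Q-D$ satisfies the stated jet conditions (no pure terms in the diastasis, mixed second derivatives equal to $g_{i\bar j}(y)$ up to flat errors, hence $\partial_{i\bar j\alpha_1\cdots\alpha_m}S|_{y=x}=-\partial_{\alpha_1\cdots\alpha_m}g_{i\bar j}(x)$) is the right use of almost-analyticity, and it also explains why the infinite-order-vanishing ambiguities of $\Phi(x,y)$ never enter the operators $R_j$; the $\pi$-factor cancellation between $\omega_g^n/n!=\pi^{-n}\det g\,dV$ and the Gaussian normalization, and the monotonicity argument $e^{-mD}\leq e^{-m_0D}$ for the existence claim, are likewise correct on the small $\Omega$ the paper fixes. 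The one place where your write-up falls short of a complete proof is the remainder estimate you defer to the last paragraph: to justify truncation one must bound the Taylor remainder of $e^{mS}$, e.g.\ by $|mS|^{L}e^{mS}e^{-mQ}=|mS|^{L}e^{-mD}\lesssim m^{L}|u|^{3L}e^{-cm|u|^2}$, whose integral is $O(m^{-n-L/2})$, so that choosing $L$ large beats any prescribed power of $m^{-1}$ uniformly on compacta. You identify the right ingredients for this but do not carry the estimate out; that is a gap of detail rather than of method, and with it filled in your argument is a complete and faithful proof of the cited theorem.
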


Formulas for $R_0$ and $R_1$ were computed by Berezin \cite{Ber}.
Engli\v{s} \cite{Eng} obtained $R_k,\, k\leq3$ by a tour de force
computation.

\begin{theorem} \label{engr} Let $\Gamma=(V\cup\{f\},E)\in \dot{\mathcal G}^{ss}$. Then we have
\begin{equation} \label{eqr}
R_{\Gamma}=\frac{\det(A(\Gamma_-)-I)}{|\dot{\rm Aut}(\Gamma)|},
\end{equation}
where $\Gamma_-$ is the subgraph of $\Gamma$ obtained by removing
the vertex $f$ from $\Gamma$.
\end{theorem}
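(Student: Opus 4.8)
The plan is to begin from Engli\v{s}' explicit formula \eqref{eqeng} for $R_jf$ and reorganize it as a sum over graphs, reading off the coefficient of each $\Gamma$. Working at the center $x$ of a normal coordinate system, we have $g^{i\bar j}(x)=\delta_{ij}$, so $L=\sum_i\partial_i\partial_{\bar i}$ and $L^k=\sum_{i_1,\dots,i_k}\prod_{a=1}^k\partial_{i_a}\partial_{\bar i_a}$ is a sum of $k$ matched pairs of holomorphic/antiholomorphic derivatives. First I would apply the multivariate Leibniz rule to expand $\tfrac{1}{\det g}L^k(f\det g\, S^{k-j})\big|_{y=x}$, distributing the $2k$ derivatives among the factors $f$, $\det g$, and the $k-j$ copies of $S$. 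Each matched pair places one unbarred and one barred index on (possibly different) factors; since these indices are contracted, each pair produces exactly one directed edge, running from the factor carrying the unbarred index to the one carrying the barred index, in agreement with the edge convention of Figure \ref{figweyl}.

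The next step is the combinatorial dictionary for the three types of factors. The distinguished vertex $f$ simply collects whichever derivatives land on it. Each copy of $S$ must receive at least one holomorphic and one antiholomorphic derivative to survive at $y=x$, and then the defining property $\partial_{i\bar j\alpha_1\cdots\alpha_m}S|_{y=x}=-g_{i\bar j\alpha_1\cdots\alpha_m}(x)$ turns it into an ordinary (metric) vertex carrying a sign $-1$, all of whose incident edges are of the $L$-pair type above. The factor $\det g$, together with the prefactor $1/\det g$, is handled by Lemma \ref{dettree}: the derivatives landing on it produce a signed sum over vertex-disjoint unions of cycles, whose vertices are again metric vertices but whose base indices are contracted cyclically to form the \emph{cycle edges}, the extra decorations being $L$-pair half-edges. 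Thus for a fixed target graph $\Gamma$, every ordinary vertex of $\Gamma_-$ is realized either as an $S$-vertex or as a vertex of one of these cycles, and a contribution is indexed precisely by the choice of a linear subdigraph $L\subseteq\Gamma_-$ (the cycle-covered vertices together with their cycle edges), the remaining vertices being $S$-vertices.

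With this bookkeeping the weight and sign fall out cleanly. If $L$ covers $c=|V(L)|$ vertices, then $s=|V|-c$ copies of $S$ remain, so $k-j=s$ and $|E|=k+c$; hence $|E|-|V|=j$, confirming that every $\Gamma$ produced has weight $j$. The sign of such a term is the product of the $S$-signs $(-1)^s$ and the cycle sign $(-1)^{i(L)}$ from \eqref{eqdettree}; since a union of cycles on $c$ vertices has $c$ edges, $i(L)=p(L)+c$, and the total sign collapses to
\begin{equation*}
(-1)^{s}(-1)^{i(L)}=(-1)^{|V|-c}(-1)^{p(L)+c}=(-1)^{|V|}(-1)^{p(L)}.
\end{equation*}
Summing over all linear subdigraphs $L$ of $\Gamma_-$ and invoking the coefficient theorem (Theorem \ref{cds}), which gives $\sum_{L}(-1)^{p(L)}=P_{\Gamma_-}(1)=\det(I-A(\Gamma_-))$, I obtain
\begin{equation*}
\sum_{L\subseteq\Gamma_-}(-1)^{|V|}(-1)^{p(L)}=(-1)^{|V|}\det(I-A(\Gamma_-))=\det(A(\Gamma_-)-I),
\end{equation*}
which is exactly the numerator in \eqref{eqr}.

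Finally I would account for the scalar prefactor. The factorials $\tfrac{1}{k!(k-j)!}$ in \eqref{eqeng} are precisely the symmetry weights of the $k$ interchangeable applications of $L$ and the $k-j$ indistinguishable copies of $S$; passing from this labeled count to the sum over isomorphism classes contributes the automorphism factor $1/|\dot{\rm Aut}(\Gamma)|$, exactly as in the orbit-counting argument of Lemma \ref{graph2}. I expect this last bookkeeping to be the main obstacle: one must match each labeled configuration (labeled $L$-pairs, labeled $S$-factors, and an explicit placement of all derivatives, including the permutation symmetries of parallel edges) to its abstract graph and verify that the stabilizer is $\dot{\rm Aut}(\Gamma)$, while simultaneously checking that the splitting of the edge set into $L$-edges and cycle edges is unambiguous, so that the sum over $k$ reorganizes term-by-term into the sum over linear subdigraphs of $\Gamma_-$. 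The sign computation and the determinantal identity are then routine given Lemma \ref{dettree} and Theorem \ref{cds}.
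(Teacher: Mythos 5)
Your proposal is correct and follows essentially the same route as the paper: expand Engli\v{s}' formula \eqref{eqeng} via Leibniz and the dictionary ($S$-factors become metric vertices with sign $-1$, the $\det g$ factor is handled by Lemma \ref{dettree}), index the contributions to a fixed $\Gamma$ by linear subdigraphs $L$ of $\Gamma_-$ with sign $(-1)^{|V|+p(L)}$, and convert the sum into $\det(A(\Gamma_-)-I)$ via Theorem \ref{cds}. The bookkeeping you flag as the main obstacle is precisely what the paper dispatches with a short orbit-stabilizer argument: each $\dot{\rm Aut}(\Gamma)$-equivalence class of linear subdigraphs contributes $(-1)^{p(L)+|V|}/|\dot{\rm Aut}(L)|$, where $\dot{\rm Aut}(L)$ is the subgroup preserving $L$, and the action of $\dot{\rm Aut}(\Gamma)$ on the set of linear subdigraphs (with isotropy $\dot{\rm Aut}(L)$) turns this into your per-subdigraph weight $1/|\dot{\rm Aut}(\Gamma)|$.
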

\begin{proof} Let $\mathcal L$ be the set of linear directed subgraphs of
$\Gamma_-$. We define an equivalence relation $\sim$ on $\mathcal L$
by
\begin{equation}
L_1\sim L_2 \text{ if there is an automorphism } h\in \dot{\rm
Aut}(\Gamma) \text{ such that } h(L_1)=L_2.
\end{equation}

Let $\tilde{\mathcal L}=\mathcal L/\sim$ be the equivalence class.
Given $L\in \mathcal L$, let $\dot{\rm Aut}(L)$ be the subgroup of
$\dot{\rm Aut}(\Gamma)$ that leaves $L$ invariant. From Lemma
\ref{dettree} and \eqref{eqeng}, we have
\begin{equation}
R_{\Gamma}=\sum_{L\in \tilde{\mathcal L}}\frac{(-1)^{p(L)+|V|}}{|\dot{\rm
Aut}(L)|},
\end{equation}
where $p(L)$ denotes the number of components of $L$.

We have the natural action of $\dot{\rm Aut}(\Gamma)$ on $\mathcal
L$. Then the set of orbits is $\tilde{\mathcal L}$ and the isotropy
group at $L$ is $\dot{\rm Aut}(L)$. So we get the desired equation
\eqref{eqr} from Theorem \ref{cds}.
\end{proof}

From \eqref{eqber2}, \eqref{eqb2} and Theorem \ref{eng}, we get
Loi's recursion formula \cite{Loi}
\begin{equation} \label{eqloi}
B_k(x)=-\sum_{i+j=k \atop
i,j\geq1}B_i(x)B_j(x)-\sum_{\ell+i+j=k\atop 1\leq \ell\leq k}R_\ell
(B_i(x,y)B_j(y,x))|_{y=x}.
\end{equation}
It was pointed out to the author recently that essentially the same identity was also obtained independently
in \cite{Cha}.

In \cite{Xu}, $B_k$ was written as a summation over graphs.
\begin{equation} \label{eqc}
B_k(x)=\sum_{G\in\mathcal G^{ss}(k)} z(G)\cdot G, \quad
z(G)\in\mathbb Q
\end{equation}
and it is proved that if $G=(V,E)\in \mathcal G^{ss}$ is strongly
connected, then
\begin{equation} \label{eqstrong}
 z(G) = -\frac{\det(A-I)}{|{\rm Aut}(G)|},
\end{equation}
where $A$ is the adjacency matrix of $G$. This formula also follows
directly from Theorem \ref{engr} and \eqref{eqloi}.

In general, if $G\in\mathcal G^{ss}$ is a disjoint union of connected
subgraphs $G=G_1\cup\dots\cup G_n$, then we have
\begin{equation} \label{eqdiscon}
z(G)=\begin{cases}\dfrac{(-)^n\det(A-I)}{|{\rm Aut}(G)|}, & \text{if
all } G_i \text{ are strongly connected};
\\
0,& \text{otherwise}.
\end{cases}
\end{equation}
The following proposition was proved in \cite{Xu}. Here we give a
shorter proof using Theorem \ref{engr}.
\begin{proposition} \label{sink}
If $G\in \mathcal G^{ss}$ is connected but not strongly connected, then
$z(G) = 0$.
\end{proposition}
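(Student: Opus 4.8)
The plan is to read Loi's recursion \eqref{eqloi} at the level of graph coefficients and to isolate the contribution of a single connected graph $G$. Since $G$ is connected, the bilinear term $\sum_{i+j=k}B_iB_j$ produces only disjoint unions of two nonempty graphs and therefore cannot contribute to $z(G)$; the entire coefficient is carried by the operator term $-\sum_{\ell+i+j=k}R_\ell(B_i(x,y)B_j(y,x))|_{y=x}$. Expanding $R_\ell$ through Theorem \ref{engr}, each summand is $\frac{\det(A(\Gamma_-)-I)}{|\dot{\rm Aut}(\Gamma)|}$ times a pointed graph $\Gamma$, and feeding the two off-diagonal factors $B_i(x,y)$ and $B_j(y,x)$ into the distinguished vertex is exactly the partition-function substitution of Definition \ref{partition}. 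Thus $z(G)$ becomes a signed sum over all assemblies of $G$ out of a core $\Gamma_-$ and two off-diagonal subgraphs, with weights normalized as in Lemma \ref{graph2}.

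The geometric input I would exploit is the round-trip orientation $x\to y\to x$: the factors $B_i(x,y)$ and $B_j(y,x)$ are glued with opposite orientations and then identified at $y=x$, so that every edge of $G$ sits on a directed path through the merge vertex. Combined with Theorem \ref{cds}, which rewrites $\det(A(\Gamma_-)-I)$ as a signed sum over cycle covers of $\Gamma_-$ --- each of whose cycles necessarily lies inside a single strongly connected component --- this shows that the whole signed expansion of $z(G)$ is organized by the strongly connected components of $G$. Since $G$ is connected but not strongly connected, its condensation has at least two vertices, so I may fix a sink component $C$; by definition no edge of $G$ leaves $C$.

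The heart of the argument is a fixed-point-free, sign-reversing involution on the set of assemblies (together with their associated cycle covers) built from the sink $C$. Because $C$ has no outgoing inter-component edge, there is always a distinguished edge or covered vertex at the boundary of $C$ whose status can be toggled --- either reassigning it between the core cycle cover and an off-diagonal factor, or merging/splitting the single cycle that reaches $C$ --- and each toggle flips exactly one sign, coming either from the parity $(-1)^{p(L)}$ of Theorem \ref{cds} or from the overall sign in \eqref{eqloi}, while fixing the underlying graph $G$ and the automorphism weight. Pairing each assembly with its toggle then cancels the sum and yields $z(G)=0$. I expect the crux to be precisely the construction of this involution: one must verify that being a sink always guarantees an available toggle with no fixed points (this is exactly where non-strong-connectivity enters, since for a strongly connected $G$ the toggle is obstructed and one recovers instead the nonzero value \eqref{eqstrong}), and that the $1/|{\rm Aut}|$ normalizations are respected throughout, which is the role of Lemma \ref{graph2}.
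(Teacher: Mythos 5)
Your setup matches the paper's: read Loi's recursion \eqref{eqloi} at the level of graph coefficients, discard the bilinear term because it only produces disconnected graphs, expand $R_\ell$ via Theorem \ref{engr}, and look for a sign-reversing cancellation organized around a sink component $C$. But the heart of the proof --- the involution itself --- is exactly what you leave unconstructed, and the mechanism you sketch is not the right one. The cancellation is not a toggle on individual edges or on elements of a cycle cover of $\Gamma_-$: every cycle of $G$ lies inside a single strongly connected component, so no cycle ``reaches'' $C$ from outside and there is nothing to merge or split at the boundary of $C$; moreover, reassigning one boundary edge does not in general change the parity $(-1)^{p(L)}$ of Theorem \ref{cds} by exactly one. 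The actual involution is coarser: since $C$ is a sink and is strongly connected, in any configuration contributing to $z(G)$ the \emph{whole} component $C$ lies either inside the core, $\Gamma_-=H+C$, or as a disjoint component of the sink factor, $\tilde B^-=B^-\coprod C$. Pairing these two configurations, the first contributes a factor $\det(A(C)-I)$ by block-triangularity of the adjacency matrix, while the second contributes $z(C)\,|{\rm Aut}(C)|=-\det(A(C)-I)$ by \eqref{eqstrong} together with the multiplicativity of $z$ and ${\rm Aut}$ on disjoint unions; the two terms cancel exactly (this is the computation \eqref{eqb7} versus \eqref{eqb8}).

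There are two further gaps. First, the argument must be an induction on the weight of $G$: without the inductive hypothesis you cannot assert that the factors $\tilde B^{\pm}$ entering the recursion are disjoint unions of strongly connected graphs --- the coefficients $z(\tilde B^{\pm})$ of connected-but-not-strongly-connected pieces are precisely the unknowns you are trying to kill --- and induction is also what forces $C$ to appear as a \emph{disjoint} component of $\tilde B^-$ rather than attached to it. Your proposal never sets this up, so the ``signed sum over assemblies'' you want to cancel is not actually under control. Second, the proposition is stated for $G\in\mathcal G^{ss}$; the pairing argument works only for stable $G$, and the semistable case requires the separate observation that the conversions \eqref{eqcur1}, \eqref{eqcur2} between partial derivatives of the metric and covariant derivatives of curvature send strongly connected graphs to strongly connected graphs. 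Your proposal addresses neither point.
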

\begin{proof}
We will proceed by induction on the weight of $G$. First we assume that $G\in \mathcal G(k)$ is stable and has weight $k$.
Then any sink or source of $G$ must be at
least semistable.
Without loss of generality, we may assume that $C\in\mathcal G^{ss}$ is a sink of $G$.

In the right-hand side of
\eqref{eqloi}, graphs from the first summation are disconnected, so
they do not contribute to $z(G)$ and may be omitted. For the second summation in the right-hand side of
\eqref{eqloi}, we denote by $\tilde B^+$ and $\tilde B^-$ the source $B_j(y,x)$ and the sink $B_i(x,y)$ respectively.
By induction, both $\tilde B^+$ and $\tilde B^-$ are disjoint union of strongly connected semistable graphs.
By \eqref{eqr}, we see that $z(G)$ equals the coefficient of $G$ in
\begin{equation}\label{eqb6}
-\sum_{\ell+i+j=k\atop 1\leq\ell\leq k}\sum_{\Gamma\in\dot{\mathcal G}(\ell)\atop \tilde B^+\in\mathcal G^{ss}(j),\tilde B^-\in\mathcal G^{ss}(i)}\frac{\det(A(\Gamma_{-})-I)}{|{\dot{\rm Aut}}(\Gamma)|}z(\tilde B^+)z(\tilde B^-)D_{\Gamma}(\tilde B^+,\tilde B^-).
\end{equation}
Note that in the above summation, we have either $C\subset \Gamma_-$ or $C\subset \tilde B^-$.
As illustrated in Figure
\ref{figberg}, where each arrow in the graph may represent
multiple edges, we have $(\Gamma_-,\tilde B^+,\tilde B^-)=(H+C,B^+,B^-)$ in the former case and
$(\Gamma_-,\tilde B^+,\tilde B^-)=(H,B^+,B^-\coprod C)$ in the latter case.

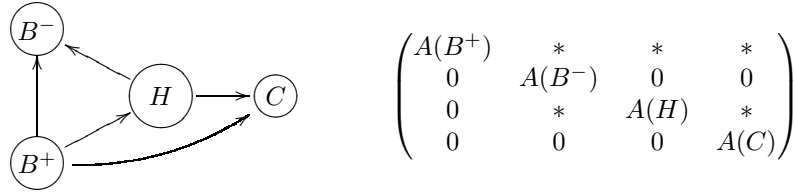
\begin{figure}[h]
\begin{tabular}{c}
$\xymatrix@C=7mm@R=1mm{ *++[o][F-]{B^-} & &\\  &
*+++[o][F-]{H} \ar[ul] \ar[r]& *++[o][F-]{C}
\\ *++[o][F-]{B^+} \ar[ur]\ar[uu]\ar@/_0.8pc/[urr] & &}$
\end{tabular} \qquad
\begin{tabular}{c}
$\begin{pmatrix} A(B^+) & \ast & \ast & \ast
\\ 0& A(B^-) & 0 & 0
\\ 0& \ast & A(H) & \ast
\\ 0 & 0 & 0 & A(C)
\end{pmatrix}$
\end{tabular}
\caption{A configuration for $G$ and the adjacency
matrix} \label{figberg}
\end{figure}

By Lemma \ref{graph2},
the contribution of $(\Gamma_-,\tilde B^+,\tilde B^-)=(H+C,B^+,B^-)$ to $z(G)$ equals
\begin{multline}\label{eqb7}
-z(B^-)|{\rm Aut}(B^-)|\times z(B^+)|{\rm
Aut}(B^+)|\frac{1}{|{\rm
Aut}(G)|}\det\left(\begin{pmatrix} A(H) & \ast
\\ 0& A(C)
\end{pmatrix}-I\right)\\
=-z(B^-)|{\rm Aut}(B^-)|\times z(B^+)|{\rm
Aut}(B^+)|\frac{1}{|{\rm
Aut}(G)|}\det(A(C)-I)\det(A(H)-I)
\end{multline}
and the contribution of $(\Gamma_-,\tilde B^+,\tilde B^-)=(H,B^+,B^-\coprod C)$ to $z(G)$ equals
\begin{multline}\label{eqb8}
-z(B^-\coprod C)\big|{\rm Aut}(B^-\coprod C)\big|\times z(B^+)|{\rm
Aut}(B^+)|\frac{1}{|{\rm
Aut}(G)|}\det(A(H)-I)\\
=-z(B^-)|{\rm Aut}(B^-)|\times z(C)|{\rm Aut}(C)|\times z(B^+)|{\rm
Aut}(B^+)|\frac{1}{|{\rm
Aut}(G)|}\det(A(H)-I)\\
=-z(B^-)|{\rm Aut}(B^-)|\times (-\det(A(C)-I))\times z(B^+)|{\rm
Aut}(B^+)|\frac{1}{|{\rm
Aut}(G)|}\det(A(H)-I).
\end{multline}
In the second equation of \eqref{eqb8}, we used
\begin{gather*}
z(B^-\coprod C)=\frac{z(B^-)z(C)}{\varepsilon(B^-,C)},\\
\big|{\rm Aut}(B^-\coprod C)\big|=|{\rm
Aut}(B^-)||{\rm
Aut}(C)|\varepsilon(B^-,C),
\end{gather*}
where $\varepsilon(B^-,C)=1$ or $2$ depending on whether $B^-\ncong C$ or $B^-\cong C$.

Therefore for any given $B^+$ and $B^-$, we have that \eqref{eqb7} and \eqref{eqb8} add up to zero. This concludes the proof
of $z(G)=0$ when $G$ is a connected but not strongly connected stable graph.

When $G$ is only semistable, $z(G)=0$ follows from the fact that when using \eqref{eqcur1} and \eqref{eqcur2} to convert
between covariant
derivatives of curvatures and partial derivatives
of metrics, we always turn strongly connected graphs into strongly connected graphs.
\end{proof}

Fix a bounded neighborhood $U$ of $x$ such that \eqref{eqb2} holds.
Applying Theorem \ref{eng} to the right-hand side of \eqref{eqber2}
and using \eqref{eqber3}, we get
\begin{equation} \label{eqb3}
\sum_{m=0}^k B_{m}(x) Q_{k-m} f(x)=\sum^k_{j=0}\sum_{i=0}^{k-j} R_j
(B_{i}(x,y)B_{k-j-i}(y,x)f(y))|_{y=x},
\end{equation}
namely
\begin{equation} \label{eqb5}
Q_k f(x)=\sum^k_{j=0}\sum_{i=0}^{k-j} R_j
(B_{i}(x,y)B_{k-j-i}(y,x)f(y))|_{y=x}-\sum_{m=1}^k B_{m}(x) Q_{k-m}
f(x),
\end{equation}
where the operators $R_j$ apply to the $y$-variable.

\begin{theorem} \label{engq}
Let $\Gamma\in \dot{\mathcal G}$. Then
\begin{equation} \label{eqq}
Q_{\Gamma}=
\begin{cases}
\dfrac{\det(A(\Gamma_-)-I)}{|\dot{\rm Aut}(\Gamma)|} & \text{if }
\Gamma \text{ is strongly connected},\\
0 & otherwise,
\end{cases}
\end{equation}
where $\Gamma_-$ is the subgraph of $\Gamma$ obtained by removing
the vertex $f$ from $\Gamma$.
\end{theorem}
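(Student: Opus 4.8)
The plan is to induct on the weight $k=w(\Gamma)=|E|-|V|$, with the recursion \eqref{eqb5} as the engine, modelling the argument on Theorem \ref{engr} and the proof of Proposition \ref{sink}. As there, it suffices to treat stable $\Gamma$: once \eqref{eqq} is known for stable pointed graphs, the semistable case follows because the conversions \eqref{eqcur1}--\eqref{eqcur2} between covariant derivatives of curvature and partial derivatives of the metric carry strongly connected graphs to strongly connected graphs, and so preserve both the vanishing and the nonvanishing assertions. The base case $k=0$ is immediate, since $Q_0f=f$ corresponds to the isolated distinguished vertex $f$, which is trivially strongly connected with $\Gamma_-$ empty and $\det(A(\Gamma_-)-I)=1$.

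For the inductive step I would expand the right-hand side of \eqref{eqb5} into a sum over graphs. In the main term $\sum_{j,i}R_j(B_i(x,y)B_{k-j-i}(y,x)f(y))|_{y=x}$ I expand $R_j$ into pointed graphs $\Gamma'\in\dot{\mathcal G}^{ss}(j)$ by Theorem \ref{engr}, and expand $B_i,B_{k-j-i}$ by \eqref{eqstrong}--\eqref{eqdiscon}, so that each is a disjoint union of strongly connected semistable graphs. The distinguished vertex $f'$ of $\Gamma'$ then distributes its incident edges among the three factors by the Leibniz rule; this is a threefold version of the partition function of Definition \ref{partition}, with factors $\tilde B^-=B_i(x,y)$, $\tilde B^+=B_{k-j-i}(y,x)$ and the test function, the last retained as the new distinguished vertex $f$. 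The automorphism weights are organized exactly as in Lemma \ref{graph2} and Remark \ref{rm2}. The correction term $-\sum_{m\geq1}B_m(x)Q_{k-m}f(x)$ produces only graphs with a closed component disjoint from $f$, so it contributes nothing when $\Gamma$ is connected.

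The structural observation that drives everything is this: since $B_i(x,y)$ is antiholomorphic and $B_{k-j-i}(y,x)$ holomorphic in $y$, the barred derivatives of $f'$ can land only on $\tilde B^-$ and the unbarred only on $\tilde B^+$, so in every output graph each vertex of $\tilde B^-$ is incident only to incoming edges and each vertex of $\tilde B^+$ only to outgoing edges. Thus $\tilde B^-$ is a union of sink components and $\tilde B^+$ a union of source components. Since $\Gamma$ is strongly connected and $\tilde B^\pm$ omit $f$, a nonempty $\tilde B^-$ or $\tilde B^+$ would be a proper sink or source of $\Gamma$, which is impossible; hence $\tilde B^-=\tilde B^+=\emptyset$. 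This forces $i=k-j-i=0$, $j=k$ and $\Gamma'=\Gamma$, and the lone surviving term reduces to $R_k(f)$, giving $Q_\Gamma=R_\Gamma=\det(A(\Gamma_-)-I)/|\dot{\rm Aut}(\Gamma)|$ by Theorem \ref{engr}. When $\Gamma$ is connected but not strongly connected, its condensation is a directed acyclic graph with at least two nodes, so I may choose a sink strongly connected component $C$ with $f\notin C$ (passing to a source by the edge-reversing symmetry $x\leftrightarrow y$ if the only $f$-free extremal component is a source). The purity of the edges at $\tilde B^\pm$ forces such a $C$ to sit entirely inside $\Gamma'_-$ or entirely inside $\tilde B^-$, which are exactly the two configurations of Figure \ref{figberg}; using $z(C)=-\det(A(C)-I)/|{\rm Aut}(C)|$ from \eqref{eqstrong}, these two contributions cancel term by term as in \eqref{eqb7}--\eqref{eqb8}, whence $Q_\Gamma=0$.

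The remaining, and most delicate, case is the disconnected one $\Gamma=\Gamma_f\sqcup D$ with $D\neq\emptyset$ a union of closed components, for this is where the correction term $-\sum_{m\geq1}B_m Q_{k-m}f$ genuinely enters: the contributions of the main term in which $D$ is split off as an isolated closed piece must be balanced against it, using the multiplicativity of $z$ in \eqref{eqdiscon} together with the inductive identity $Q_{\Gamma_f}=R_{\Gamma_f}$. I expect the main obstacle to be precisely this bookkeeping: propagating the distinguished vertex $f$ through the threefold partition, guaranteeing in each non-strongly-connected $\Gamma$ that a sink or source component avoiding $f$ can be selected, and checking that the automorphism factors of Lemma \ref{graph2} combine with the sign of $z(C)$ so that the correction term annihilates the residual contributions exactly.
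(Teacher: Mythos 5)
Your proposal is essentially the paper's own proof: the same induction on weight driven by \eqref{eqb5}, the same expansion of the main term via Theorem \ref{engr} and \eqref{eqstrong}--\eqref{eqdiscon}, the same observation that holomorphy/antiholomorphy in $y$ forces $\tilde B^-$ to consist of sink components and $\tilde B^+$ of source components, and the same cancellation \eqref{eqb7}--\eqref{eqb8} against a sink or source component avoiding $f$; your strongly connected and connected-but-not-strongly-connected cases are exactly the paper's cases (iii) and (i). The one case you defer as delicate bookkeeping --- $\Gamma$ disconnected with every component strongly connected, where the correction term genuinely enters --- is the paper's case (ii), and it is not an obstacle but a three-line count. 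Write $\Gamma=\Gamma_1\cup\dots\cup\Gamma_n$ with $f\in\Gamma_n$ and $n\geq 2$. In the main term of \eqref{eqb5}, each closed component $\Gamma_i$, $i<n$, must be absorbed whole into exactly one of the three slots (the $R_j$-graph, $\tilde B^-$, or $\tilde B^+$, since a strongly connected closed piece cannot straddle them), with relative weights $+1$, $-1$, $-1$ respectively (the $-1$'s from \eqref{eqstrong}, the automorphism factors recombining by Lemma \ref{graph2} and Remark \ref{rm2}); hence the main term contributes $(1-1-1)^{n-1}\det(A(\Gamma_-)-I)/|\dot{\rm Aut}(\Gamma)|=(-1)^{n-1}\det(A(\Gamma_-)-I)/|\dot{\rm Aut}(\Gamma)|$. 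In the sum $\sum_{m\geq1}B_m(x)Q_{k-m}f(x)$, the inductive vanishing of $Q$ on disconnected graphs kills every splitting except ``all closed components to $B_m$, the component $\Gamma_n$ to $Q_{k-m}$'', which by \eqref{eqdiscon} and the inductive formula for $Q_{\Gamma_n}$ contributes the same quantity $(-1)^{n-1}\det(A(\Gamma_-)-I)/|\dot{\rm Aut}(\Gamma)|$; the minus sign in front of this sum in \eqref{eqb5} then cancels the two, giving $Q_\Gamma=0$. Finally, disconnected graphs having some component that is not strongly connected are handled by your sink/source cancellation verbatim (this is how the paper phrases case (i), for a disjoint union with a non-strongly-connected component), so with that remark your trichotomy covers all of $\dot{\mathcal G}$ and the proof closes.
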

\begin{proof}
We will use induction on the weight of $\Gamma$. There are three
cases:
\begin{enumerate}
\item[i)]
Assume that $\Gamma\in \dot{\mathcal G}$ is a disjoint union of
connected subgraphs $\Gamma=\Gamma_1\cup\dots\cup \Gamma_n,\, n\geq
1$ and $\Gamma_1$ is not strongly connected. Since $\Gamma$ is
stable, $\Gamma_1$ must have a source or sink that does not contain
the distinguished vertex $f$ and belongs to $\mathcal G^{ss}$.

Using the same argument in the proof of Proposition \ref{sink}, we
can prove that the contribution of the first term in the right-hand
side of \eqref{eqb5} to $Q_\Gamma$ is zero. The contribution of the
second term in the right-hand side of \eqref{eqb5} to $Q_\Gamma$ is
also zero by induction and Proposition \ref{sink}.

\item[ii)]
Assume that each component in $\Gamma=\Gamma_1\cup\dots\cup
\Gamma_n,\, n\geq 2$ is strongly connected and $f\in \Gamma_n$. Then
by Theorem \ref{engr} and \eqref{eqdiscon}, we see that the
contribution of the first term in the right-hand side of
\eqref{eqb5} to $Q_\Gamma$ is
\begin{equation} \label{eqb4}
 \frac{(-1-1+1)^{n-1}}{|\dot{\rm
Aut}(\Gamma)|}\det(A(\Gamma_-)-I).
\end{equation}
By induction, we see that the contribution of the second term in the
right-hand side of \eqref{eqb5} to $Q_\Gamma$ is
\begin{equation}
 \frac{(-1)^{n-1}}{|\dot{\rm
Aut}(\Gamma)|}\det(A(\Gamma_-)-I),
\end{equation}
which cancel with \eqref{eqb4}. So combining (i), we have proved
that $Q_{\Gamma}=0$ if $\Gamma$ is not connected.

\item[iii)]
If $\Gamma$ is strongly connected, then the contribution of the
second term in the right-hand side of \eqref{eqb5} to $Q_\Gamma$ is
$0$. the contribution of the first term in the right-hand side of
\eqref{eqb5} to $Q_\Gamma$ is just
$$R_{\Gamma}=\frac{\det(A(\Gamma_-)-I)}{|\dot{\rm Aut}(\Gamma)|}.$$
\end{enumerate}

So we conclude the proof with the above three cases.
\end{proof}

\begin{corollary}
The equation \eqref{eqq} holds also for $\Gamma\in \dot{\mathcal G}^{ss}$.
\end{corollary}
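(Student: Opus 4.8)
The plan is to promote \eqref{eqq} from the stable graphs $\dot{\mathcal G}$, where it is proved in Theorem~\ref{engq}, to all of $\dot{\mathcal G}^{ss}$, using the same device that settled the semistable half of Proposition~\ref{sink}. A Weyl invariant admits two graphical presentations linked by the conversion identities \eqref{eqcur1} and \eqref{eqcur2}: a covariant-derivative-of-curvature presentation, whose monomials are stable graphs and whose coefficients on those monomials are fixed by the center-values computed in Theorems~\ref{engr} and~\ref{engq}; and a partial-derivative-of-metric presentation, whose monomials are the general semistable graphs of \eqref{eqengrq}. Expanding a curvature monomial into metric derivatives via \eqref{eqcur1}, \eqref{eqcur2} is precisely the operation that recovers the semistable coefficient $Q_\Gamma$ from the stable data, and the whole argument hinges on the single structural fact already used at the close of Proposition~\ref{sink}: this expansion preserves strong connectivity, taking a strongly connected stable monomial to a sum of strongly connected semistable graphs, and a stable monomial that is not strongly connected to a sum of graphs that are again not strongly connected. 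For \eqref{eqcur1} this is transparent, since the curvature vertex is split into $g_{i\bar jk\bar l}$ together with the block $g^{m\bar p}g_{m\bar j\bar l}g_{ik\bar p}$, whose internal edge preserves every directed path running through the vertex and through any distinguished vertex $f$ attached to it.

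Granting this, the conclusion follows by a support argument with no further computation. In the curvature presentation Theorem~\ref{engq} makes $Q_k$ supported only on strongly connected stable monomials; expanding each into metric form therefore confines $Q_k$ to strongly connected semistable graphs, so $Q_\Gamma=0$ for every $\Gamma\in\dot{\mathcal G}^{ss}$ that is not strongly connected. For the complementary case I compare with $R_k$. Theorem~\ref{engr} furnishes $R_\Gamma=\det(A(\Gamma_-)-I)/|\dot{\rm Aut}(\Gamma)|$ throughout $\dot{\mathcal G}^{ss}$, while in the curvature presentation $R_k-Q_k$ is supported only on the stable monomials that are not strongly connected, where Theorem~\ref{engq} kills $Q$ but leaves $R$. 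By strong-connectivity preservation its metric expansion is supported only on semistable graphs that are not strongly connected; hence $Q_\Gamma=R_\Gamma$ for every strongly connected $\Gamma\in\dot{\mathcal G}^{ss}$, which by Theorem~\ref{engr} is exactly $Q_\Gamma=\det(A(\Gamma_-)-I)/|\dot{\rm Aut}(\Gamma)|$. The two cases together give \eqref{eqq} on $\dot{\mathcal G}^{ss}$.

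The hard part is the lone structural input, namely that \eqref{eqcur1} and \eqref{eqcur2} genuinely preserve the strongly-connected-component decomposition in all cases. The vertex split above disposes of \eqref{eqcur1}, but \eqref{eqcur2} rewrites higher covariant derivatives and injects extra edges through its Christoffel terms, so I must verify that the multi-edges and self-loops thereby created neither link previously separate components nor break an existing cycle, including when they meet the distinguished vertex. The attendant automorphism factors $1/|\dot{\rm Aut}(\Gamma)|$ are tracked through each split by Lemma~\ref{graph2} and Remark~\ref{rm2}. Notably, once strong-connectivity preservation is in hand no determinant identity is required, because the numerical value on strongly connected graphs is imported directly from $R_\Gamma$ through Theorem~\ref{engr}.
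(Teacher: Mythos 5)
Your proposal is correct and takes essentially the same route as the paper: the non-strongly-connected case rests on the fact that the conversion identities \eqref{eqcur1}--\eqref{eqcur2} preserve strong connectivity (exactly the argument closing the proof of Proposition \ref{sink}), and the strongly connected case reduces to $Q_\Gamma=R_\Gamma$ combined with Theorem \ref{engr}, which is valid on all of $\dot{\mathcal G}^{ss}$. Your only addition is to justify $Q_\Gamma=R_\Gamma$ by running the same conversion argument on the difference $R_k-Q_k$, which merely spells out what the paper asserts tersely as ``$Q_\Gamma=R_\Gamma$ still holds.''
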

\begin{proof}
When $\Gamma\in\dot{\mathcal G}^{ss}$ is strongly connected, $Q_\Gamma=R_\Gamma$ still holds. When $\Gamma\in\dot{\mathcal G}^{ss}$
is connected but not strongly connected, $Q_\Gamma=0$ follows by the same reason as stated in the end of proof of Proposition \ref{sink}.
\end{proof}

\begin{corollary}
Given $k\geq 0$. Let $\Gamma=\Big[\xymatrix{\bullet
\ar@(ur,dr)[]^{k}}\Big]$. Then
\begin{equation}
R_{\Gamma}=Q_{\Gamma}=\frac{1}{k!}.
\end{equation}
\end{corollary}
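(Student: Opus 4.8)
The plan is to read off both quantities directly from Theorems \ref{engr} and \ref{engq}, so the only real work is to compute the two ingredients $\det(A(\Gamma_-)-I)$ and $|\dot{\rm Aut}(\Gamma)|$ for this one-vertex graph. First I would record the structure of $\Gamma$: it consists of the single distinguished vertex $f$ carrying $k$ self-loops and no ordinary vertices, so that $V=\emptyset$ and $|E|=k$. This confirms $w(\Gamma)=|E|-|V|=k$ and that $\Gamma\in\dot{\mathcal G}(k)$ holds vacuously, since there are no ordinary vertices whose stability condition could fail; it also makes Theorem \ref{engq} applicable without appealing to the semistable extension.

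Next I would compute $\det(A(\Gamma_-)-I)$. Removing $f$ leaves the empty digraph, whose adjacency matrix is the $0\times 0$ matrix; hence $A(\Gamma_-)-I$ is the empty matrix and $\det(A(\Gamma_-)-I)=1$ by the empty-determinant convention (equivalently, this is the value returned by Theorem \ref{cds}, where the only linear subgraph of the empty graph is the empty one, so the relevant characteristic polynomial is the constant $1$). Then I would identify $\dot{\rm Aut}(\Gamma)$: since $\Gamma$ has only the one vertex $f$, which is fixed by every pointed automorphism, an element of $\dot{\rm Aut}(\Gamma)$ is determined entirely by its action on the $k$ parallel self-loops at $f$. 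Every permutation of these loops is an automorphism, so $\dot{\rm Aut}(\Gamma)\cong S_k$ and $|\dot{\rm Aut}(\Gamma)|=k!$. Substituting into \eqref{eqr} gives $R_\Gamma=\det(A(\Gamma_-)-I)/|\dot{\rm Aut}(\Gamma)|=1/k!$. Finally, $\Gamma$ is strongly connected (obviously for $k\geq 1$, and vacuously for $k=0$), so Theorem \ref{engq} yields $Q_\Gamma=R_\Gamma=1/k!$.

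Since the statement is a bookkeeping consequence of the two closed formulas, there is no substantive obstacle; the only points requiring care are the degenerate conventions. The main subtlety is the claim $|\dot{\rm Aut}(\Gamma)|=k!$: one must use the paper's convention that automorphisms permute parallel (multi-)edges — the loops corresponding to the index contractions — rather than the narrower notion that records only a vertex permutation. Secondarily, I would check the $k=0$ endpoint separately, where $\Gamma$ is the bare distinguished vertex: there $S_0$ is trivial, $0!=1$, $\Gamma_-$ is again empty, and strong connectivity holds vacuously, so the formula still returns $R_\Gamma=Q_\Gamma=1/0!=1$.
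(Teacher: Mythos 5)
Your proposal is correct and follows exactly the route the paper intends: the corollary is stated as an immediate consequence of Theorems \ref{engr} and \ref{engq}, with $\det(A(\Gamma_-)-I)=1$ by the empty-matrix convention and $|\dot{\rm Aut}(\Gamma)|=k!$ from permuting the $k$ parallel loops. Your care with the degenerate conventions (empty determinant, loop-permuting automorphisms, the $k=0$ case) is consistent with the paper's usage elsewhere, e.g.\ the coefficient $\frac12$ of the two-loop vertex in $R_1$ and $Q_2$.
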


Engli\v{s} \cite{Eng} defined a scalar invariant $r_k=R_k(1)$. We
see that only graphs with $\deg^+f=\deg^-f=0$ contribute to $r_k$.
\begin{corollary} We have $r_0=1$ and when $k\geq 1$  (see Remark \ref{rm1})
\begin{equation}
r_k=\sum_{G\in\mathcal G(k)} r(G) G=\sum_{G\in\mathcal G(k)}
\frac{\det(A(G)-I)}{|{\rm Aut}(G)|} G.
\end{equation}
\end{corollary}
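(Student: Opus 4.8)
The plan is to read the formula directly off Theorem~\ref{engr} and the graphical expansion \eqref{eqengrq} of $R_k$, once one understands the effect of setting $f=1$. I would begin from $r_k=R_k(1)$ together with $R_kf=\sum_{\Gamma\in\dot{\mathcal G}^{ss}(k)}R_\Gamma\Gamma$. The key point, already recorded in the sentence preceding the statement, is that only graphs with $\deg^+(f)=\deg^-(f)=0$ survive: each edge incident to the distinguished vertex $f$ corresponds in the partition function $D_\Gamma$ to a covariant derivative acting on $f$, and every such derivative of the constant function $1$ vanishes. Hence the surviving $\Gamma$ are exactly those in which $f$ is an isolated vertex carrying no self-loop.

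For each such $\Gamma$ I would use Remark~\ref{rm1} to identify it with the ordinary graph $G:=\Gamma_-\in\mathcal G^{ss}(k)$ obtained by deleting the isolated vertex $f$; since the weight $|E|-|V|$ counts only the ordinary vertices, weights are preserved and this is a bijection onto $\mathcal G^{ss}(k)$. Because $f$ is isolated, every automorphism of $\Gamma$ fixing $f$ restricts to an automorphism of $G$, and conversely every automorphism of $G$ extends uniquely by fixing $f$; thus $|\dot{\rm Aut}(\Gamma)|=|{\rm Aut}(G)|$. Theorem~\ref{engr} then gives
\[
R_\Gamma=\frac{\det(A(\Gamma_-)-I)}{|\dot{\rm Aut}(\Gamma)|}=\frac{\det(A(G)-I)}{|{\rm Aut}(G)|}=r(G),
\]
so that $r_k=\sum_{G\in\mathcal G^{ss}(k)}r(G)\,G$. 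Restricting the sum to stable graphs is then legitimate because, as noted in Section~\ref{graph}, at the center of the normal coordinate the terms attached to semistable-but-not-stable graphs vanish; this yields $r_k=\sum_{G\in\mathcal G(k)}r(G)\,G$ for $k\geq1$. The value $r_0=1$ I would treat separately, either by the direct evaluation $R_0(1)=1$ from \eqref{eqeng} or by observing that the only contributing graph in weight $0$ is the empty graph, for which $\det(A-I)=1$ and $|{\rm Aut}|=1$.

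I do not anticipate a serious obstacle, as the argument is essentially bookkeeping built on Theorem~\ref{engr}. The one step deserving genuine care is the automorphism identity $|\dot{\rm Aut}(\Gamma)|=|{\rm Aut}(G)|$ and, relatedly, checking that the correspondence $\Gamma\leftrightarrow G$ is a weight-preserving bijection that neither omits nor double-counts graphs possessing extra symmetry; this is the place where I would verify the combinatorics most carefully.
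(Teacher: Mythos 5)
Your proof is correct and takes essentially the same route as the paper, which states this corollary as an immediate consequence of Theorem \ref{engr} after the observation that only graphs with $\deg^+f=\deg^-f=0$ contribute when $f=1$, identified with ordinary stable graphs via Remark \ref{rm1}. The bookkeeping you spell out explicitly (the weight-preserving bijection $\Gamma\leftrightarrow G$, the identity $|\dot{\rm Aut}(\Gamma)|=|{\rm Aut}(G)|$ for isolated $f$, and the vanishing of semistable-but-not-stable terms at the center of normal coordinates) is exactly what the paper leaves implicit.
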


Thus we have $r(G)=(-1)^{n(G)}z(G)$ when each of the $n(G)$
components of $G$ is strongly connected.

The Theorem \ref{engq} also proves Theorem \ref{main}. It is obvious
that the Berezin star product defined in \eqref{eqmain} is local in
the sense that $\supp C_j(f_1, f_2)$ is contained in $\supp f_1
\cap\supp f_2$ for all $j\geq1$. Since $Q_\Gamma=0$ if $\Gamma$ is
not strongly connected, we see that \eqref{eqmain} defines a
deformation quantization with separation of variables, namely it
satisfies $f\star h=f\cdot h$ and $h\star g=h\cdot g$ for any
locally defined holomorphic function $f$, antiholomorpihc function
$g$ and an arbitrary function $h$. In particular, $1$ is the unit in
the star product.

\begin{proposition}
The Berezin star product satisfies $\overline{f_1\star
f_2}=\bar{f_2}\star\bar{f_1}$.
\end{proposition}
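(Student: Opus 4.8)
The plan is to exploit the edge-reversal involution on digraphs together with the reality of the coefficients in \eqref{eqmain}. Given a pointed stable graph $\Gamma=(V\cup\{f\},E)$, let $\Gamma^{\mathrm{op}}$ denote the graph obtained by reversing the orientation of every edge. First I would record the elementary invariance properties of this involution: reversing edges interchanges $\deg^-$ and $\deg^+$ at every vertex, so $\Gamma^{\mathrm{op}}$ is again pointed stable; it preserves strong connectedness and the weight $|E|-|V|$; it induces a bijection $\dot{\rm Aut}(\Gamma)\cong\dot{\rm Aut}(\Gamma^{\mathrm{op}})$, hence $|\dot{\rm Aut}(\Gamma)|=|\dot{\rm Aut}(\Gamma^{\mathrm{op}})|$; and since $A((\Gamma^{\mathrm{op}})_-)=A(\Gamma_-)^{T}$ we have $\det(A((\Gamma^{\mathrm{op}})_-)-I)=\det(A(\Gamma_-)-I)$. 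In particular $\Gamma\mapsto\Gamma^{\mathrm{op}}$ is an involution of $\dot{\mathcal G}_{scon}$ under which the entire scalar coefficient $\det(A(\Gamma_-)-I)/|\dot{\rm Aut}(\Gamma)|$, a rational number and hence real, is left unchanged.

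The heart of the argument is the claim that complex conjugation acts on partition functions by $\overline{D_\Gamma(f_1,f_2)}=D_{\Gamma^{\mathrm{op}}}(\bar f_2,\bar f_1)$. To establish it I would use the conjugation symmetry of the metric and its derivatives, $\overline{g_{i\bar j\alpha}}=g_{j\bar i\bar\alpha}$, where $\bar\alpha$ denotes the multi-index obtained from $\alpha$ by exchanging barred and unbarred indices. Because each directed edge of the graph encodes the contraction of an unbarred index at its tail against a barred index at its head, this symmetry shows that conjugating a Weyl invariant reverses every edge. For the two distinguished slots one has $\overline{\bar\partial^{\beta}f_1}=\partial^{\beta}\bar f_1$ and $\overline{\partial^{\alpha}f_2}=\bar\partial^{\alpha}\bar f_2$, so the inward (barred) edges at $f_1$ become outward (unbarred) edges at $\bar f_1$, while the outward edges at $f_2$ become inward edges at $\bar f_2$. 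Merging the two distinguished vertices back into one, the inward edges of the distinguished vertex of $\Gamma^{\mathrm{op}}$ are exactly those carried by $\bar f_2$ and the outward edges are those carried by $\bar f_1$; by Definition \ref{partition} this is precisely $D_{\Gamma^{\mathrm{op}}}(\bar f_2,\bar f_1)$.

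Granting the claim, I would conjugate \eqref{eqmain} termwise, treating $h$ and the rational coefficients as real, to obtain
$$\overline{f_1\star f_2}(x)=\sum_{\Gamma\in\dot{\mathcal G}_{scon}}\frac{\det(A(\Gamma_-)-I)}{|\dot{\rm Aut}(\Gamma)|}\,h^{|E|-|V|}\,D_{\Gamma^{\mathrm{op}}}(\bar f_2,\bar f_1)\Big|_x.$$
Re-indexing the sum by $\Gamma'=\Gamma^{\mathrm{op}}$ and invoking the invariance properties of the first paragraph then turns the right-hand side into the formula \eqref{eqmain} for $\bar f_2\star\bar f_1$, which finishes the proof.

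The main obstacle, and the only place requiring genuine care, is the bookkeeping in the conjugation claim $\overline{D_\Gamma(f_1,f_2)}=D_{\Gamma^{\mathrm{op}}}(\bar f_2,\bar f_1)$: one must check that the global edge reversal induced by $\overline{g_{i\bar j\alpha}}=g_{j\bar i\bar\alpha}$ is compatible with the splitting of $f$ into $f_1,f_2$, and in particular that the two function slots are \emph{swapped} rather than fixed. Once the correspondence between edge orientation and barred/unbarred indices is pinned down this is mechanical, but it is exactly where the reversal of order in $\overline{f_1\star f_2}=\bar f_2\star\bar f_1$ originates. As a conceptual sanity check one can note that the same symmetry is already visible at the level of the Berezin transform: its kernel $|K_\alpha(x,y)|^2 K_\alpha(x,x)^{-1}e^{-\alpha\Phi(y)}$ is real, so $\overline{I_\alpha f}=I_\alpha\bar f$, forcing $\overline{Q_k f}=Q_k\bar f$ and hence the Hermitian symmetry $c_{k\alpha\beta}=\overline{c_{k\beta\alpha}}$ of the coefficients in \eqref{eqkar}, from which $\overline{C_j(f_1,f_2)}=C_j(\bar f_2,\bar f_1)$ follows directly.
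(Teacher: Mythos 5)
Your proposal is correct and follows essentially the same route as the paper: conjugation turns $D_\Gamma(f_1,f_2)$ into $D_{\Gamma^T}(\bar f_2,\bar f_1)$ for the edge-reversed (transposed) graph $\Gamma^T$, and since transposition preserves strong connectedness, the weight, $|\dot{\rm Aut}(\Gamma)|$, and $\det(A(\Gamma_-)-I)$, re-indexing the sum over $\Gamma^T$ yields $\bar f_2\star\bar f_1$. The only difference is expository: you spell out the index bookkeeping behind $\overline{D_\Gamma(f_1,f_2)}=D_{\Gamma^T}(\bar f_2,\bar f_1)$, which the paper takes as immediate.
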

\begin{proof}
We have
\begin{align*}
\overline{f_1\star
f_2}(x)&=\sum_{\Gamma=(V\cup\{f\},E)\in\dot{\mathcal
G}_{scon}}\frac{\det(A(\Gamma_-)-I)}{\dot{\rm Aut}(\Gamma)}
h^{|E|-|V|} D_{\Gamma^T}(\bar f_2,\bar f_1)\Big|_x\\
&=\sum_{\Gamma=(V\cup\{f\},E)\in\dot{\mathcal
G}_{scon}}\frac{\det(A(\Gamma^T_-)-I)}{\dot{\rm Aut}(\Gamma^T)}
h^{|E|-|V|} D_{\Gamma^T}(\bar f_2,\bar f_1)\Big|_x\\
&=\bar{f_2}\star\bar{f_1}.
\end{align*}
Here $\Gamma^T$ is the transpose of $\Gamma$, namely $\Gamma^T$ is
obtain by reversing all arrows in $\Gamma$.
\end{proof}

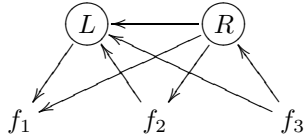
\begin{figure}[h]
$\xymatrix@C=3mm@R=7mm
{& *++[o][F-]{L}\ar[dl] & &*++[o][F-]{R}\ar[ll] \ar[dl] \ar[dlll] &\\
 f_1 & &f_2 \ar[ul] & &f_3 \ar[ul]\ar[ulll]} $ \caption{A graph $\Gamma$ with 3 distinguished vertices} \label{figassoc}
\end{figure}

\begin{proposition} The Berezin star product is associative, namely
$f_1\star(f_2\star f_3)=(f_1\star f_2)\star f_3$.
\end{proposition}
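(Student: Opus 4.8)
The plan is to expand both $f_1\star(f_2\star f_3)$ and $(f_1\star f_2)\star f_3$ as summations over pointed stable graphs carrying the three distinguished vertices $f_1,f_2,f_3$ (in the sense of Remark \ref{rm4}), and then to verify that every such three-pointed graph occurs with the same coefficient on the two sides. First I would substitute the defining formula \eqref{eqmain} into itself. For the left-hand side, writing $f_2\star f_3=\sum_{\Gamma_2\in\dot{\mathcal G}_{scon}}\frac{\det(A((\Gamma_2)_-)-I)}{|\dot{\rm Aut}(\Gamma_2)|}h^{w(\Gamma_2)}D_{\Gamma_2}(f_2,f_3)$ and feeding this into $f_1\star(\,\cdot\,)$ produces, for each pair $(\Gamma_1,\Gamma_2)$ of strongly connected one-pointed graphs, the nested expression $D_{\Gamma_1}\big(f_1,\,D_{\Gamma_2}(f_2,f_3)\big)$ weighted by $\det(A((\Gamma_1)_-)-I)\det(A((\Gamma_2)_-)-I)$. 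Here $D_{\Gamma_2}(f_2,f_3)$ is a two-pointed semistable graph in $\tilde{\mathcal G}^{ss}$, and by Remark \ref{rm3} the outer graph $\Gamma_1$ acts only on its vertices, so the construction stays inside the stable range.

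The central bookkeeping step is to repackage these nested sums as a single sum over three-pointed graphs $G$. Applying Remark \ref{rm2} twice, the three automorphism normalizations collapse to $1/|{\rm Aut}(G)|$, so the coefficient of a fixed $G$ on the left-hand side becomes $\frac{1}{|{\rm Aut}(G)|}$ times a sum, over all factorizations of $G$ into an inner strongly connected piece joining $f_2,f_3$ and an outer strongly connected piece joining $f_1$ to that piece, of the products $\det(A((\Gamma_1)_-)-I)\det(A((\Gamma_2)_-)-I)$. Repeating the computation for $(f_1\star f_2)\star f_3$ gives the analogous sum in which $f_1,f_2$ are grouped first. Figure \ref{figassoc} records the generic shape of a contributing $G$: $f_1$ is a pure sink, $f_3$ a pure source, and the internal vertices $L,R$ mediate the flow. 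I expect to show that both expansions equal the single canonical three-pointed formula $\sum_G\frac{\det(A(G_-)-I)}{|{\rm Aut}(G)|}h^{w(G)}D_G(f_1,f_2,f_3)$, where $G_-$ deletes all three distinguished vertices.

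The hard part will be this last matching of coefficients: it amounts to a determinant identity asserting that summing $\det(A((\Gamma_1)_-)-I)\det(A((\Gamma_2)_-)-I)$ over the two genuinely different families of nested strongly connected factorizations of $G$ produces the same value $\det(A(G_-)-I)$. The natural tools are the coefficient theorem (Theorem \ref{cds}) together with the vanishing half of Theorem \ref{engq}: non-strongly-connected configurations contribute $0$, and for the surviving ones the block-triangular structure of the adjacency matrix — exactly the block shape displayed in Figure \ref{figberg} — lets $\det(A-I)$ factor along strongly connected components. I would organize the argument so that, as in case (ii) of the proof of Theorem \ref{engq}, the factor $(-1-1+1)$ and the telescoping of determinants force the two association patterns to agree term by term.

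Finally, as an independent check on this combinatorial identity, I would note the conceptual route: by \eqref{eqbt} the Berezin transform $I$ intertwines $\star$ with the Berezin--Toeplitz star product $\star_{BT}$, which is associative because it arises from composition of Toeplitz operators; since the coefficients on both sides of the desired identity are universal polynomials in the curvature, agreement on the rich class of compact K\"ahler manifolds would extend to every K\"ahler manifold. This shortcut relies on external associativity of $\star_{BT}$ and a universality principle, so I would keep the self-contained graph-theoretic proof as the main argument and use the operator-theoretic picture only to corroborate the key determinant identity.
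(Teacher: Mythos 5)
Your first half coincides with the paper's own proof: you expand both bracketings over graphs with three distinguished vertices (Remark \ref{rm4}) and use Lemma \ref{graph2} together with Remark \ref{rm2} to collapse the automorphism prefactors, so that the coefficient of a fixed three-pointed stable graph $G$ arising from a pair $(L,R)$ is $Q_LQ_R|\dot{\rm Aut}(L)||\dot{\rm Aut}(R)|/|{\rm Aut}(G)|$. At that point the paper simply observes that this expression is identical for the two bracketings and stops; the explicit determinant value of $Q_\Gamma$ from Theorem \ref{engq} is never invoked in its associativity argument. You instead substitute $Q_\Gamma=\det(A(\Gamma_-)-I)/|\dot{\rm Aut}(\Gamma)|$ and aim at the stronger claim that both bracketings equal a closed triple-product formula with coefficients $\det(A(G_-)-I)/|{\rm Aut}(G)|$. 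That is where your proposal has a genuine gap: the ``determinant identity'' you single out as the hard part is exactly the statement that needs proof, and your sketch does not prove it --- citing Theorem \ref{cds}, the $(-1-1+1)$ cancellation from case (ii) of the proof of Theorem \ref{engq}, and ``telescoping'' names tools, not an argument.

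Moreover, the missing step is not really a determinant computation. In any valid factorization there are no edges from the outer piece back into the inner piece, so the adjacency matrix of $G_-$ is block triangular and $\det(A(G_-)-I)$ automatically factors as $\det(A((\Gamma_1)_-)-I)\det(A((\Gamma_2)_-)-I)$ --- this is precisely the block shape of Figure \ref{figberg} that you mention. What actually remains is combinatorial: one must show that each contributing $G$ admits matching families (in fact one apiece) of valid strongly connected factorizations for the two bracketings, and that a graph contributes to one side if and only if it contributes to the other. This is nontrivial because for a fixed pair $(L,R)$ the two bracketings do \emph{not} yield the same graphs: taking $L=R$ to be the one-loop graph, $f_1\star(f_2\star f_3)$ produces the graph with edges $f_3\to f_2$ and $f_3\to f_1$, which $(f_1\star f_2)\star f_3$ produces only from the different pair with $L$ trivial and $R$ the two-loop graph. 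Your sum-over-factorizations formulation is the right bookkeeping for this phenomenon, but the matching itself is never carried out, so the central claim is left unestablished. Finally, the fallback through \eqref{eqbt} and associativity of $\star_{BT}$ is, as you yourself note, only a consistency check: it imports Schlichenmaier's theorem and a universality principle rather than closing the gap in the self-contained argument.
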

\begin{proof}
We have
\begin{equation}
f_1\star f_2=\sum_{\Gamma=(V\cup\{f\},E)\in\dot{\mathcal G}^{ss}}
Q_\Gamma h^{|E|-|V|} D_{\Gamma}(f_1, f_2).
\end{equation}
As we are taking summation over pointed semistable graphs, this
equation holds in a neighborhood of $x$.

Since the associativity is equivalent to
\begin{equation}
\sum_{j=0}^k C_j(f_1,C_{k-j}(f_2,f_3))=\sum_{j=0}^k
C_{k-j}(C_j(f_1,f_2),f_3),
\end{equation}
it is enough to prove that for any two pointed semistable graphs
$L,R$ in $\dot{\mathcal G}^{ss}$ as shown in
Figure \ref{figassoc}, the coefficients of any given stable graph $\Gamma$ in $
Q_{L}D_{L}(f_1,Q_{R}D_R(f_2,f_3))$ and $Q_{R}D_{R}(Q_L
D_L(f_1,f_2),f_3)$ are equal. In fact, by Lemma \ref{graph2} and
Remark \ref{rm2}, the coefficients of a stable graph $\Gamma$ in both terms are equal to
\begin{equation}
Q_{L}Q_R|\dot{\rm Aut}(L)||\dot{\rm Aut}(R)|\frac{1}{|{\rm
Aut}(\Gamma)|}.
\end{equation}
Note that $\Gamma$ has three distinguished vertices labeled by $f_1,f_2$
and $f_3$. An automorphism in ${\rm Aut}(\Gamma)$ should fix
these three vertices.
\end{proof}

\vskip 30pt
\section{Computations of $R_k, Q_k,\, k\leq3$}

Fix a normal coordinate around $x\in M$. The covariant derivative of
$f$ satisfies
\begin{equation}
f_{;\alpha_1\dots\alpha_p\beta}=\partial_\beta
f_{;\alpha_1\dots\alpha_p}-\sum_{i=1}^p
\Gamma_{\beta\alpha_i}^\gamma
f_{;\alpha_1\dots\alpha_{i-1}\gamma\alpha_{i+1}\dots\alpha_p}.
\end{equation}
We can use the above equation to write partial derivatives of $f$ in
terms of covariant derivatives of $f$.

\begin{example}\label{computeq} The formulae of $R_k$ and $Q_k$ in partial derivatives
may be computed readily using \eqref{eqengrq}, Theorem \ref{engr}
and Theorem \ref{engq}. We will convert them to the curvature-tensor
expressions. Note that our convention of curvatures $R_{i\bar j
k\bar l}, R_{i\bar j}, \rho$ in \cite{Xu} all differ by a minus sign
with that of \cite{Eng}. The following notations were introduced in
\cite{Eng}.
\begin{equation}
L_{Ric}f:=R_{i \bar j}f_{;j\bar i},\qquad L_{R} f:=R_{i\bar j k\bar
l}f_{;j\bar i l\bar k}.
\end{equation}

When $k=0$, $R_0=Q_0=1$.

When $k=1$, we have
\begin{align*}
R_1 f= \Big[\xymatrix{\bullet \ar@(ur,dr)[]^{1}}\Big]
+\frac{1}{2}\left[\xymatrix{*+[o][F-]{2}}\right]&=f_{i\bar i}+\frac12 g_{i\bar i j\bar j}f\\
&=f_{;i\bar i}-\frac12 \rho f\\
&=\Delta f-\frac12 \rho f.
\end{align*}
\begin{align*}
Q_1 f=\Big[\xymatrix{\bullet \ar@(ur,dr)[]^{1}}\Big]&=f_{i\bar i}\\
&=\Delta f.
\end{align*}

When $k=2$, we have
\begin{align*}
R_2 f=& \frac12\Big[\xymatrix{\bullet
\ar@(ur,dr)[]^{2}}\Big]+\frac12 \Big[\xymatrix{*+[o][F-]{2}}\,\Big{|}\xymatrix{\bullet \ar@(ur,dr)[]^{1}}\Big]+\frac12
\Big[\xymatrix{*+[o][F-]{2} & \bullet \ar[l]}\Big]+\frac12
\Big[\xymatrix{*+[o][F-]{2} \ar[r] &
\bullet }\Big]\\
& \qquad+\frac13 \left[\xymatrix{*+[o][F-]{3}}\right]-\frac38
\left[\xymatrix{ \circ \ar@/^/[r]^2 & \circ \ar@/^/[l]^2
         }\right] -\frac 12 \left[\xymatrix{
        *+[o][F-]{1} \ar@/^/[r]^1
         &
        *+[o][F-]{1} \ar@/^/[l]^1} \right ] +\frac{1}{8}\left[\xymatrix{*+[o][F-]{2}}\mid \xymatrix{*+[o][F-]{2}}\right]\\
=& \frac12 f_{i\bar i j\bar j}+\frac12 g_{i\bar i j\bar j}f_{k\bar
k}+\frac12 g_{i\bar i j\bar j\bar k}f_k+\frac12 g_{i\bar i j\bar j
k}f_{\bar k}&\\
&\qquad +\left(\frac13 g_{i\bar i j\bar j k\bar k}-\frac38 g_{i\bar
j k\bar l}g_{j\bar i l\bar k}-\frac12 g_{i\bar i k\bar l}g_{j\bar j
l\bar k}+\frac18 g_{i\bar i j\bar j}g_{k\bar k l\bar l}\right)f\\
=& \frac12\Delta^2 f-\frac12 L_{Ric}f-\frac{\rho}{2}\Delta f-\frac12
(\rho_{;\bar k}f_{;k}+\rho_{;k}f_{;\bar
k})\\
&\qquad
+\left(-\frac13\Delta\rho-\frac1{24}|R|^2+\frac16|Ric|^2+\frac18\rho^2\right)f.
\end{align*}
and
\begin{align*}
Q_2 f=\frac12\Big[\xymatrix{\bullet \ar@(ur,dr)[]^{2}}\Big]&=\frac12f_{i\bar i j\bar j}\\
&=\frac12 f_{;i\bar i j\bar j}-\frac12 R_{i\bar k}f_{;k\bar i}\\
&=\frac12\Delta^2 f-\frac12 L_{Ric}f.
\end{align*}

When $k=3$, we express $Q_3 f$ in terms of the basis as used in
Engli\v{s} \cite{Eng}.
\begin{gather}
\sigma_1=\Delta^3 f,\quad \sigma_2=R_{i\bar j}(\Delta f)_{;j\bar
i},\quad \sigma_3=R_{i\bar j k\bar l}f_{;j\bar i l\bar k},\nonumber\\
\sigma_4=R_{i\bar j;\bar k}f_{;j\bar i k},\quad \sigma_5=R_{i\bar j;
k}f_{;j\bar i \bar k},\quad \sigma_6=R_{i\bar j k\bar l}R_{j\bar i m\bar k}f_{;l\bar m},\label{gathertensor}\\
\sigma_7=R_{i\bar j k\bar l}R_{j\bar i}f_{;l\bar k},\quad
\sigma_8=\rho_{;i\bar j}f_{;j\bar i},\quad \sigma_9=R_{i\bar
j}R_{k\bar i}f_{;j\bar k}.\nonumber
\end{gather}

We will compute the coefficients $c_i,\,1\leq i \leq 9$, such that
\begin{equation} \label{eqthreeinr}
Q_3 f=c_1\sigma_1+c_2\sigma_2+\cdots+c_{9}\sigma_{9}.
\end{equation}

There are $9$ strongly connected pointed stable graphs of weight $3$
in $\dot{\mathcal G}_{scon}(3)$.
\begin{gather}
\tau_1=\Big[\xymatrix{\bullet \ar@(ur,dr)[]^{3}}\Big],\quad
\tau_2=\left[\xymatrix{
        *+[o][F-]{1} \ar@/^/[r]^1
         &
        \bullet\, 1 \ar@/^/[l]^1} \right ],\quad
\tau_3=\left[\xymatrix{ \circ \ar@/^/[r]^2 & \bullet \ar@/^/[l]^2
         }\right],\nonumber\\
\tau_4=\left[\xymatrix{
        *+[o][F-]{1} \ar@/^/[r]^1
         &
        \bullet \ar@/^/[l]^2} \right ],\quad
\tau_5=\left[\xymatrix{
        *+[o][F-]{1} \ar@/^/[r]^2
         &
        \bullet \ar@/^/[l]^1} \right ],\quad
\tau_6=\left[\begin{minipage}{0.6in}$\xymatrix@C=2mm@R=5mm{
                & \bullet \ar[dr]^{1}             \\
         \circ \ar[ur]^{1} \ar@/^0.3pc/[rr]^{1} & &  \circ  \ar@/^0.3pc/[ll]^{2}
         }$
         \end{minipage}\right],\label{gathergraph}\\
\tau_7=\left[\begin{minipage}{0.65in}
             $\xymatrix@C=2mm@R=5mm{
                & \bullet \ar@/^-0.3pc/@<0.2ex>[dl]^{1}             \\
         \circ \ar@/^0.3pc/@<0.7ex>[ur]^{1} \ar@/^-0.3pc/@<0.2ex>[rr]^{1} & & *+[o][F-]{1}
         \ar@/^0.3pc/@<0.7ex>[ll]^{1}
         }$
         \end{minipage}\right],\quad
\tau_8=\left[\xymatrix{
        *+[o][F-]{2} \ar@/^/[r]^1
         &
        \bullet \ar@/^/[l]^1} \right ],\quad
\tau_9=\left[\begin{minipage}{0.6in}$\xymatrix@C=2mm@R=5mm{
                & \bullet \ar[dr]^{1}             \\
        *+[o][F-]{1} \ar[ur]^{1}  & &  *+[o][F-]{1} \ar[ll]^{1}
         }$
         \end{minipage}\right]. \nonumber
\end{gather}

By Theorem \ref{engq}, we have
\begin{equation} \label{eqthreeing}
Q_3 f= q_1\tau_1+q_2\tau_2+\cdots+q_{9}\tau_{9},
\end{equation}
where
\begin{gather*}
q_1=1/6,\quad q_2=0,\quad
q_3=-1/4,\quad q_4=0,\quad q_5=0,\\
q_6=-1/2,\quad q_7=-1,\quad q_8=1/2,\quad q_9=0.
\end{gather*}

We need to express each $\tau_i$ as a linear combination of
$\sigma_i,\,1\leq i\leq 9$. By a tedious but straightforward
computation, we get
\begin{gather*}
\tau_1
=\sigma_1+3\sigma_2+2\sigma_3+2\sigma_4+2\sigma_5+\sigma_6+4\sigma_7+\sigma_8-2
\sigma_9, \\
\tau_2 = \sigma_2+\sigma_7,\quad \tau_3 = \sigma_3+\sigma_6,\quad
\tau_4 =
\sigma_4,\quad \tau_5 = \sigma_5, \\
\tau_6 = \sigma_6,\quad \tau_7 = \sigma_7,\quad \tau_8 =
\sigma_6+2\sigma_7+\sigma_8,\quad \tau_9 = \sigma_9.
\end{gather*}
Substituting into \eqref{eqthreeing}, we can get the coefficients in
\eqref{eqthreeinr}.
\begin{gather*}
c_1=1/6,\quad c_2=-1/2,\quad
c_3=-1/12,\quad c_4=-1/3,\quad c_5=-1/3,\\
c_6=-1/12,\quad c_7=2/3,\quad c_8=-2/3,\quad c_9=-1/3.
\end{gather*}

All these values of $R_k, Q_k$ computed here match the computations
by Engli\v{s} \cite{Eng}.

From $Q_k,\,k\leq 3$, we can get the invariant expressions for the
first four coefficients of the Berezin star product
\eqref{eqberdef}.
\begin{align*}
C_0(f_1,f_2)&=f_1 f_2, \\
C_1(f_1,f_2)&={f_1}_{;\bar i} {f_2}_{;i},\\
C_2(f_1,f_2)&=\frac12 {f_1}_{;\bar i\bar j}{f_2}_{;i j},\\
C_3(f_1,f_2)&=\frac16 {f_1}_{;\bar i\bar j\bar
k}{f_2}_{;ijk}+\frac14 R_{i\bar j
k\bar l}{f_1}_{;\bar i\bar k}{f_2}_{;jl}-\frac12 R_{i\bar j k\bar l}R_{j\bar i m\bar k}{f_1}_{;\bar m}{f_2}_{;l}\\
& \qquad -R_{i\bar j k\bar l}R_{j\bar i}{f_1}_{;\bar k}{f_2}_{;
l}-\frac12 \rho_{;i\bar j}{f_1}_{;\bar i}{f_2}_{;j}.
\end{align*}
Note that around a normal coordinate system of $x$, we have
$f_{i_1,\dots i_r}(x)=f_{;i_1,\dots i_r}(x)$ and $f_{\bar i_1,\dots
\bar i_r}(x)=f_{;\bar i_1,\dots \bar i_r}(x)$ for any $r\geq1$.

\end{example}
\begin{example}\label{computeq2}
We now describe how to compute $R_3$ in terms of curvature tensors
explicitly. The method works for any $R_k$ or $Q_k$ and for the
Bergman kernel.

By Table \ref{tb1}, there are $46$ pointed stable graphs of weight
$3$ in $\dot{\mathcal G}(3)$. Let $\tau_i,\,1\leq i\leq 46$ be the
corresponding Weyl invariants in terms of partial derivatives of
metrics. They are also in one-to-one correspondence with a basis
$\sigma_i,\,1\leq i\leq 46$ of curvature tensors of weight $3$. Each
representative $\sigma_i$ is determined up to an interchange of
indices. By Ricci formula, the difference lies in the space of
strictly lower degree curvature tensors. (The definition of weight
and order of a curvature tensor can be found in \cite{Lu} or
\cite{Xu}).

It is relatively easy to express $\sigma_i$ in terms of $\tau_i$.
Namely we can obtain a $46\times 46$ square matrix
$M=[m_{ij}]_{1\leq i, j\leq 46}$ of rational numbers, such that
\begin{equation}
\sigma_i=\sum_{j=1}^{46} m_{ij}\tau_j,\quad 1\leq i\leq 46.
\end{equation}
Let $\tilde M=[\tilde m_{ij}]_{1\leq i, j\leq 46}$ be the inverse
matrix of $M$, then
\begin{equation} \label{eqtautosigma}
\tau_i=\sum_{j=1}^{46} \tilde m_{ij}\sigma_j,\quad 1\leq i\leq 46.
\end{equation}

By Theorem \ref{engr} and \eqref{eqtautosigma}, we finally get the
curvature-tensor expression for $R_3 f$.
\begin{align*}
R_3 f&=\sum_{i=1}^{46}\frac{\det(A((\tau_i)_-)-I)}{|\dot{\rm
Aut}(\tau_i)|}\tau_i\\
&=\sum_{i=1}^{46}\sum_{j=1}^{46} \tilde
m_{ij}\frac{\det(A((\tau_i)_-)-I)}{|\dot{\rm Aut}(\tau_i)|}\sigma_i.
\end{align*}

We implemented the above procedure with the help of a computer and
the final result of $R_3 f$ matches with that computed by Engli\v{s}
\cite{Eng}.
\end{example}

From Example \ref{computeq} and the appendix, we have computed
$Q_k,\,0\leq k\leq 4$, thus the first five terms of the Berezin star
product \eqref{eqmain}.

\begin{example} \label{computebt}
We now compute $C^{BT}_j,\,j\leq 3$. By \eqref{eqbt}, we see that
the coefficients of $C^{BT}_j(f_1,f_2)$ equal to the coefficients in
the asymptotic expansion of $I^{-1}$. The latter can be computed
using \eqref{eqmain}.
\begin{multline}
I^{-1}=f-h\Big[\xymatrix{\bullet
              \ar@(ur,dr)[]^{1}}\Big]
+h^2\left(\frac12\Big[\xymatrix{\bullet
           \ar@(ur,dr)[]^{2}}\Big]-\left[\xymatrix{
        *+[o][F-]{1} \ar@/^/[r]^1
         &
        \bullet \ar@/^/[l]^1} \right ]\right)\\
+h^3\left(-\frac{1}{6}\tau_1+\tau_2+\frac{1}{4}\tau_3+\frac{1}{2}\tau_4+\frac{1}{2}\tau_5-\tau_9\right)+O(h^4).
\end{multline}
These $\tau_i$ are graphs defined in \eqref{gathergraph}.

Converting partial derivatives to covariant derivatives, we get the
invariant expressions for the first four coefficients of the
Berezin-Toeplitz star product \eqref{eqbt}.
\begin{align*}
C^{BT}_0(f_1,f_2)&=f_1 f_2, \\
C^{BT}_1(f_1,f_2)&=-{f_1}_{;i} {f_2}_{;\bar i},\\
C^{BT}_2(f_1,f_2)&=\frac12 {f_1}_{;ij}{f_2}_{;\bar i\bar j}+R_{i\bar j}{f_1}_{;j}{f_2}_{;\bar i},\\
C^{BT}_3(f_1,f_2)&=-\frac16 {f_1}_{;ijk}{f_2}_{;\bar i\bar j\bar
k}-R_{i\bar j}{f_1}_{;jk}{f_2}_{;\bar i\bar k}-\frac14 R_{i\bar j
k\bar l}{f_1}_{;jl}{f_2}_{;\bar i\bar
k}\\
& \qquad -\frac12 R_{i\bar j;\bar k}{f_1}_{;jk}{f_2}_{;\bar
i}-\frac12 R_{i\bar j; k}{f_1}_{;j}{f_2}_{;\bar i\bar k}-R_{i\bar
j}R_{k\bar i}{f_1}_{;j}{f_2}_{;\bar k}.
\end{align*}
Compared to the Berezin star product, the holomorphic and
antiholomorphic variables are swapped.
\end{example}

\vskip 30pt
\section{Fefferman's invariants}

Fefferman \cite{Fef} and Boutet de Monvel-Sj\"ostrand \cite{BS}
proved the asymptotic expansion of the Bergman kernel of a strongly
pseudoconvex domain in $\mathbb C^n$ near the boundary. Nakazawa
\cite{Nak} obtained an explicit formula for the first several
coefficients in Fefferman's asymptotic expansion for bounded
strictly pseudoconvex complete Reinhardt domains in $\mathbb C^2$.

In \cite{Eng, Eng2}, Engli\v{s} established a relation of
Fefferman's invariants with the scalar invariants from the
asymptotic expansion of the Laplace integral, and generalized
Nakazawa's result to arbitrary Hartogs domains in $\mathbb C^n$.

In this section, we will apply Engli\v{s}' work to express
Fefferman's invariants as graph invariants. First we state the
following analogue of Theorem \ref{eng}, again due to Engli\v{s}
\cite{Eng}.
\begin{theorem}{\rm(Engli\v{s})} \label{eng2} Let $\Omega$ be a strongly pseudoconvex domain in $\mathbb C^n$
with real analytic boundary. Then there is an asymptotic expansion for the Laplace integral
$$\int_{\Omega} f(y)e^{-mD(x,y)}\frac{|\det g(x,y)|^2}{\det g(y)}dy \sim \frac{1}{m^n}\sum_{j\geq0}m^{-j}R'_j(f)(x),$$
where $\det g(x,y)$ is the almost analytic extension of $\det g(x)$,
that is
\begin{equation}
\det g(x,y)=\det\left(\frac{1}{\pi}\frac{\partial^2
\Phi(x,y)}{\partial x_j\partial\bar y_k}\right),
\end{equation}
and $R'_j: C^\infty(\Omega)\rightarrow C^\infty(\Omega)$ are
explicit differential operators defined by
\begin{equation} \label{eqfef10}
R'_j f(x)=\frac{1}{(\det
g)^2}\sum_{k=j}^{3j}\frac{1}{k!(k-j)!}L^k(f(y)|\det g(x,y)|^2
S(x,y))|_{y=x},
\end{equation}
where $L$ is the (constant-coefficient) differential operator
$$L f(y)=g^{i\bar j}(x)\partial_i\partial_{\bar j} f(y)$$
and the function $S(x,y)$ is given in Theorem \ref{eng}.
\end{theorem}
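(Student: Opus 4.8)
The plan is to observe that Theorem~\ref{eng2} rests on exactly the Laplace-asymptotic machinery already behind Theorem~\ref{eng}: the phase is again the diastatic function $D(x,y)$, which vanishes to second order along the diagonal with nondegenerate Hessian $g_{i\bar j}(x)$, and both the constant-coefficient operator $L=g^{i\bar j}(x)\partial_i\partial_{\bar j}$ and the auxiliary function $S(x,y)$ are built purely from this phase. Consequently the only change from Theorem~\ref{eng} to Theorem~\ref{eng2} is the amplitude multiplying $e^{-mD}$. First I would isolate the content of Theorem~\ref{eng} as a lemma valid for an arbitrary smooth amplitude $\phi(y)$ (with $x$ held as a parameter):
\[
\int_{\Omega}\phi(y)\,e^{-mD(x,y)}\,d\lambda(y)
\;\sim\;\frac{1}{m^{n}}\sum_{j\ge0}m^{-j}\,\frac{1}{\det g(x)}
\sum_{k=j}^{3j}\frac{1}{k!\,(k-j)!}\,
L^{k}\!\bigl(\phi\,S^{k-j}\bigr)\Big|_{y=x},
\]
where $d\lambda$ is Lebesgue measure. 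The $k$-sum is finite because $S$ vanishes to third order at $y=x$ (by its defining conditions in Theorem~\ref{eng}): each factor of $S$ contributes at least three units of vanishing while each $L$ removes only two, so a nonzero value at $y=x$ forces $3(k-j)\le 2k$, i.e.\ $k\le 3j$. Specializing $\phi=f\det g$ recovers \eqref{eqeng}.

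For Theorem~\ref{eng2} I would then feed in the amplitude $\phi(y)=f(y)\,|\det g(x,y)|^{2}/\det g(y)$, where $\det g(x,y)=\det\bigl(\tfrac1\pi\,\partial^2_{x_j\bar y_k}\Phi(x,y)\bigr)$ is the almost-analytic extension. Since $|\det g(x,y)|^{2}=\det g(x,y)\,\det g(y,x)$ is a genuine smooth function of $y$ for fixed $x$, agreeing with $\det g(y)$ on the diagonal (so that $\phi|_{y=x}=f(x)\det g(x)$ and $R'_{0}=\mathrm{id}$), the lemma applies verbatim and produces the operators $R'_{j}$ as differential operators in the jet of $\phi$ at $y=x$. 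The existence of the expansion is guaranteed by the hypothesis that $\Omega$ is strongly pseudoconvex with real-analytic boundary: since $D\ge0$ and vanishes only on the diagonal, the bound $e^{-mD}\le e^{-m_{0}D}$ for $m\ge m_{0}$ controls the integrand away from the diagonal and localizes the integral to a neighborhood of $y=x$, where the formal expansion is justified.

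The main obstacle will be threading the metric-determinant factors correctly through the Wick contractions $L^{k}$, since $L$ differentiates in $y$ and does not commute with multiplication by $1/\det g(y)$. Bringing the lemma's output into the closed form \eqref{eqfef10} (whose prefactor is $1/(\det g)^{2}$ and whose amplitude inside $L^{k}$ is $f\,|\det g(x,y)|^{2}$) requires moving the factor $1/\det g(y)$ out of the contractions; controlling the resulting commutators uses the relations between the $y$-jet of the almost-analytic extension $\det g(x,y)$ and that of $\det g(y)$ at the diagonal to regroup the powers of $\det g$. A secondary, lighter point is that the choice of almost-analytic extension is immaterial: two such extensions differ by a function flat to infinite order along the diagonal (a consequence of $\bar\partial_{x}\Phi$ and $\partial_{y}\Phi$ vanishing to infinite order there), and functions flat at $y=x$ contribute nothing to a Laplace expansion, which sees only the jet at the critical point. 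Collecting the coefficient of $m^{-n-j}$ then yields \eqref{eqfef10}.
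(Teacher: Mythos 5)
The paper gives you nothing to compare against here: Theorem \ref{eng2}, like Theorem \ref{eng}, is quoted as a result of Engli\v{s} \cite{Eng, Eng2} and used as a black box, so your proposal must stand on its own as a derivation from Theorem \ref{eng}. Its skeleton is fine: recasting Theorem \ref{eng} as a Laplace expansion for an arbitrary smooth amplitude $\phi$, with prefactor $1/\det g(x)$ and cutoff $k\le 3j$ coming from the third-order vanishing of $S$, is correct, as are your side remarks ($R'_0=\mathrm{id}$, and insensitivity to the choice of almost-analytic extension, since two choices differ by a function flat along the diagonal).

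The genuine gap is exactly the step you postpone as ``the main obstacle'': passing from the lemma applied to the amplitude $f\,|\det g(x,y)|^{2}/\det g(y)$ to the closed form \eqref{eqfef10}. No commutator analysis can accomplish this, because the two expressions are genuinely unequal. Test $j=1$, $f\equiv 1$, in normal coordinates at $x$: there $\det g(x)=1$, all third derivatives of $S$ vanish, and the almost-analytic extension satisfies $\det g(x,y)=1+O(|y-x|^{N})$, so the factor $|\det g(x,y)|^{2}$ is inert. Your route then produces
\begin{equation*}
L\bigl(\tfrac{1}{\det g}\bigr)\big|_{y=x}+\tfrac12\,L^{2}\bigl(\tfrac{S}{\det g}\bigr)\big|_{y=x}
=-\sum_{i,k}g_{k\bar k i\bar i}-\tfrac12\sum_{i,k}g_{i\bar i k\bar k}
=-\tfrac32\sum_{i,k}g_{i\bar i k\bar k},
\end{equation*}
whereas \eqref{eqfef10} (read with $S^{k-j}$ in place of $S$, a typo without which $R'_0$ would vanish instead of being the identity) gives $-\tfrac12\sum_{i,k}g_{i\bar i k\bar k}$. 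The discrepancy is a scalar-curvature term, not a flat error, so it cannot be removed by jet identities relating $\det g(x,y)$ to $\det g(y)$: in normal coordinates the $y$-jet of $\det g(x,y)$ along the diagonal is trivial, while that of $\det g(y)$ carries curvature, and this is precisely why the two candidate formulas disagree. What you missed is that \eqref{eqfef10} is the expansion of the integral whose weight carries the \emph{constant} $\det g(x)$ in the denominator, i.e.\ of $\int f(y)e^{-mD(x,y)}|\det g(x,y)|^{2}\det g(x)^{-1}dy$ --- the combination actually produced by $|K'_\alpha(x,y)|^{2}/K'_\alpha(x,x)$, and the only reading compatible with \eqref{eqfef3}, Theorem \ref{engfef} and the recursions \eqref{eqfef}, \eqref{eqfef2}. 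Under that reading your proof closes instantly, since $1/\det g(x)$ is constant in $y$ and passes through every $L^{k}$, giving \eqref{eqfef10} verbatim with no commutators to control; under the literal reading with $\det g(y)$, the factor $1/\det g(y)$ must remain inside $L^{k}$ and \eqref{eqfef10} as printed is false. Either way, the reconciliation you defer is not a technical loose end but the crux, and as you formulated it, it fails.
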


We denote by $K'_\alpha(x,y)$ the reproducing kernel of the weighted
Bergman space of all holomorphic function on $\Omega$
square-integrable with respect to the measure $e^{-\alpha \Phi}dx$.
Locally, $K'_\alpha(x,y)$ has an asymptotic expansion in a small
neighborhood of the diagonal  (see \cite{Eng})
\begin{equation}\label{eqfef9}
K'_\alpha (x,y)\sim e^{\alpha \Phi (x,y)} \det
g(x,y)\sum^\infty_{k=0} B'_k (x,y) \alpha^{n-k}.
\end{equation}
The corresponding Berezin transform is given by
\begin{equation}
I'_\alpha f(x)=\int_\Omega
f(y)\frac{|K'_\alpha(x,y)|^2}{K'_\alpha(x,x)}e^{-\alpha \Phi(y)}dy,
\end{equation}
which has an asymptotic expansion
\begin{equation}
I'_\alpha f(x)=\sum^\infty_{k=0} Q'_k f(x)\alpha^{-k}.
\end{equation}
The following analogues of \eqref{eqb5} and \eqref{eqloi} still
hold.
\begin{equation} \label{eqfef}
Q'_k f(x)=\sum^k_{j=0}\sum_{i=0}^{k-j} R'_j
(B'_{i}(x,y)B'_{k-j-i}(y,x)f(y))|_{y=x}-\sum_{m=1}^k B'_{m}(x)
Q'_{k-m} f(x),
\end{equation}
\begin{equation} \label{eqfef2}
B'_k(x)=-\sum_{i+j=k \atop
i,j\geq1}B'_i(x)B'_j(x)-\sum_{\ell+i+j=k\atop 1\leq \ell\leq
k}R'_\ell (B'_i(x,y)B'_j(y,x))|_{y=x}.
\end{equation}

Let $\tilde K(z,\zeta)$ be the (ordinary unweighted) Bergman kernel
of the Hartogs domain
\begin{equation}
\tilde \Omega=\{z=(z_1,z_2)\in\Omega\times \mathbb C^d:\,||
z_2||^2<e^{-\Phi(z_1)}\}.
\end{equation}
It was shown in \cite{Eng2} and \cite{Lig} that
\begin{equation}
\tilde K((z_1,z_2),(\zeta_1,\zeta_2))
=\sum_{j=0}^{\infty}\frac{(j+d)!}{j!\pi^d}
K'_{j+d}(z_1,\zeta_1)\langle z_2,\zeta_2\rangle^k,
\end{equation}
with the convergence uniform on compact subsets. Setting $\zeta=z$,
we have
\begin{equation}
\tilde K(z_1,z_2)=\sum_{j=0}^{\infty}\frac{(j+d)!}{j!\pi^d}
K'_{j+d}(z_1,z_1)||z_2||^{2j}.
\end{equation}
By \eqref{eqfef9}, the coefficients of this asymptotic expansion
(Fefferman's invariants) are determined by $B'_k$. See \cite{Eng}
for a precise description of its behavior when $z$ approaches the
boundary of $\tilde\Omega$.

As pointed out by Engli\v{s}, in a normal coordinate around $x$,
$R'_j$ in \eqref{eqfef10} simplifies to
\begin{equation} \label{eqfef3}
R'_j f(x)=\sum_{k=j}^{2j}\frac{1}{k!(k-j)!}L^k(f S)|_{y=x},
\end{equation}
which means that if we restrict to stable graphs, we do not need to
consider their linear subgraphs when computing $R'_{\Gamma},
Q'_{\Gamma}$ below
\begin{equation} \label{eqfef4}
R'_k f=\sum_{\Gamma\in  \dot{\mathcal G}(k)}R'_{\Gamma}\Gamma,
\qquad Q'_k f=\sum_{\Gamma\in  \dot{\mathcal
G}(k)}Q'_{\Gamma}\Gamma.
\end{equation}

\begin{theorem} \label{engfef} Let $\Gamma=(V\cup\{f\},E)\in \dot{\mathcal G}$. Then we have
\begin{equation}\label{eqfef7}
R'_{\Gamma}=\frac{(-1)^{|V|}}{|\dot{\rm Aut}(\Gamma)|}
\end{equation}
and
\begin{equation}
 Q'_{\Gamma}=
\begin{cases}
\dfrac{(-1)^{|V|}}{|\dot{\rm Aut}(\Gamma)|} & \text{if }
\Gamma \text{ is strongly connected},\\
0 & otherwise.
\end{cases}
\end{equation}
\end{theorem}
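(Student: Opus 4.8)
The plan is to establish the two formulas in turn, each by transcribing the corresponding Bergman-case argument (Theorem \ref{engr} for $R'_\Gamma$ and Theorem \ref{engq} for $Q'_\Gamma$), the one new input being Engli\v{s}' normal-coordinate formula \eqref{eqfef3}. For \eqref{eqfef7} I would follow the proof of Theorem \ref{engr} nearly verbatim: let $\mathcal L$ be the set of linear directed subgraphs of $\Gamma_-$, equipped with the equivalence relation induced by the action of $\dot{\rm Aut}(\Gamma)$, and recall that there the factor $\frac{1}{\det g}\cdots\det g$ in \eqref{eqeng} produces, through Lemma \ref{dettree}, precisely the weighted sum over these subgraphs, which orbit counting and Theorem \ref{cds} reassemble into $\det(A(\Gamma_-)-I)/|\dot{\rm Aut}(\Gamma)|$. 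The decisive difference is that \eqref{eqfef3} carries no $\det g$ factor, which is exactly Engli\v{s}' remark that in normal coordinates one need not consider linear subgraphs. Hence in the expansion $\det(A(\Gamma_-)-I)=\sum_{L}(-1)^{p(L)+|V|}$ underlying Theorem \ref{cds}, only the empty subgraph $L=\emptyset$ survives, contributing the sign $(-1)^{|V|}$, and the identical orbit-counting step gives $R'_\Gamma=(-1)^{|V|}/|\dot{\rm Aut}(\Gamma)|$.

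For $Q'_\Gamma$ I would run the weight induction of Theorem \ref{engq}, now on the recursion \eqref{eqfef} rather than \eqref{eqb5}. First I would record the $B'$-analogues of the ingredients used there: combining \eqref{eqfef2} (the counterpart of Loi's recursion \eqref{eqloi}) with \eqref{eqfef7}, one shows that the coefficient $z'(G)$ of a graph $G$ in $B'_k$ vanishes whenever $G$ is connected but not strongly connected, the analogue of Proposition \ref{sink}, and that for a disjoint union of strongly connected components $z'(G)=\pm(-1)^{|V|}/|{\rm Aut}(G)|$ with sign governed by the number of components, the analogues of \eqref{eqstrong} and \eqref{eqdiscon}; each proof is the one already given, with $\det(A-I)$ replaced throughout by $(-1)^{|V|}$. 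The three cases then go through unchanged: if $\Gamma$ is disconnected with a component that is not strongly connected, both terms of \eqref{eqfef} contribute zero; if all $n\geq2$ components are strongly connected, the contributions of the $R'$-term and of the $\sum_m B'_m Q'_{k-m}$-term coincide and cancel; and if $\Gamma$ is strongly connected the second term vanishes while the first reduces to $R'_\Gamma$.

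The hard part is the collapse in the first step: one must justify honestly that replacing \eqref{eqeng} by \eqref{eqfef3} suppresses exactly the linear-subgraph sum and nothing more, so that $\det(A(\Gamma_-)-I)$ degenerates to its single term $(-1)^{|V|}$. This is the point at which the Fefferman computation genuinely departs from the Laplace-integral one, and it is the source of the sign $(-1)^{|V|}$. A smaller point to verify is the sign bookkeeping of case (ii): one checks that with $(-1)^{|V|}$ playing the role of $\det(A(\Gamma_-)-I)$ the matching of the two contributions persists, the relevant factor $(-1-1+1)^{n-1}=(-1)^{n-1}$ being unaffected by this substitution.
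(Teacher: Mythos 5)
Your proposal is correct and follows essentially the same route as the paper: the paper declares \eqref{eqfef7} ``obvious'' from \eqref{eqfef3} (you supply the reason, namely that without the $\det g$ factor the linear-subgraph sum of Lemma \ref{dettree} collapses to the empty subgraph, leaving the sign $(-1)^{|V|}$), and it derives the $Q'_\Gamma$ formula from \eqref{eqfef}, \eqref{eqfef7} and \eqref{eqfef6} by the same induction as in Theorem \ref{engq}, exactly as you do. The only cosmetic difference is that the paper cites the $B'$-coefficient formula \eqref{eqfef6} as a separately stated corollary, whereas you re-derive those analogues of Proposition \ref{sink} and \eqref{eqdiscon} inside your argument; the mathematical content, including the multiplicativity of $(-1)^{|V|}$ over components that makes the case (ii) cancellation persist, is the same.
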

\begin{proof}
The formula for $R'_{\Gamma}$ is obvious. The formula for
$Q'_{\Gamma}$ follows from \eqref{eqfef}, \eqref{eqfef7} and
\eqref{eqfef6} by using the same argument in the proof of Theorem
\ref{engq}.
\end{proof}

\begin{corollary} Let $r'_k=R'_k(1)$. Then we have $r'_0=1$ and when $k\geq 1$  (see Remark \ref{rm1})
\begin{equation}
r'_k=\sum_{G\in\mathcal G(k)} r'(G) G=\sum_{G\in\mathcal G(k)}
\frac{(-1)^{|V|}}{|{\rm Aut}(G)|} G.
\end{equation}
\end{corollary}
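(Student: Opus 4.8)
The plan is to specialize Theorem~\ref{engfef} to the constant function $f=1$, following verbatim the argument already used for the scalar invariant $r_k=R_k(1)$ (which rested on Theorem~\ref{engr}). First I would dispose of the degenerate case $r'_0=1$: the leading coefficient of the asymptotic expansion in Theorem~\ref{eng2} is the identity operator $R'_0=\mathrm{id}$ (exactly as $R_0=1$ in Theorem~\ref{eng}), whence $R'_0(1)=1$.

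For $k\geq1$ I would begin from the graph expansion \eqref{eqfef4}, namely $R'_k f=\sum_{\Gamma\in\dot{\mathcal G}(k)}R'_\Gamma\,\Gamma$, reading each $\Gamma$ as a Weyl invariant in which the distinguished vertex $f$ carries precisely the derivatives of $f$ prescribed by its incident half-edges. Since $1$ is constant, every derivative of it vanishes, so a graph $\Gamma$ contributes nothing to $r'_k=R'_k(1)$ unless $\deg^+(f)=\deg^-(f)=0$; that is, only the graphs whose distinguished vertex is isolated (and loopless) survive the evaluation at $f=1$.

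By Remark~\ref{rm1}, the graphs $\Gamma\in\dot{\mathcal G}(k)$ whose distinguished vertex $f$ is an isolated vertex without loops are exactly the stable graphs $G\in\mathcal G(k)$ with such an $f$ adjoined. For such $\Gamma$ one has $\Gamma_-=G$, the ordinary-vertex set of $\Gamma$ coincides with $V(G)$ so that $|V|=|V(G)|$, and the weight of $\Gamma$ equals that of $G$ (so both sit in degree $k$); moreover, because $f$ is isolated, every automorphism of $\Gamma$ fixing $f$ restricts to an automorphism of $G$ and conversely, whence $\dot{\rm Aut}(\Gamma)={\rm Aut}(G)$. Applying Theorem~\ref{engfef} then gives $R'_\Gamma=\frac{(-1)^{|V|}}{|\dot{\rm Aut}(\Gamma)|}=\frac{(-1)^{|V(G)|}}{|{\rm Aut}(G)|}$, while evaluating the associated Weyl invariant at $f=1$ returns precisely $G$. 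Summing over all such $\Gamma$, equivalently over all $G\in\mathcal G(k)$, yields the claimed identity with $r'(G)=\frac{(-1)^{|V|}}{|{\rm Aut}(G)|}$.

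I do not anticipate a genuine obstacle: the statement is a direct specialization of Theorem~\ref{engfef}, entirely parallel to the $r_k$ corollary. The only point demanding care is the automorphism bookkeeping under the identification of Remark~\ref{rm1}—verifying that adjoining an isolated, loopless $f$ leaves the automorphism group unchanged, so that $\dot{\rm Aut}(\Gamma)={\rm Aut}(G)$—together with the clean check that evaluation at $f=1$ annihilates exactly those graphs whose distinguished vertex carries edges.
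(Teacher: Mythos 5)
Your proof is correct and takes essentially the same route as the paper: the paper offers no separate argument for this corollary, treating it as an immediate specialization of Theorem \ref{engfef} via the observation (stated explicitly before the analogous $r_k$ corollary) that only graphs with $\deg^+ f=\deg^- f=0$ survive evaluation at $f=1$, combined with the identification of Remark \ref{rm1}. Your bookkeeping of the weight and of $\dot{\rm Aut}(\Gamma)={\rm Aut}(G)$ under that identification simply makes explicit what the paper leaves implicit.
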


We may write $B'_k$ as a summation over stable graphs.
\begin{equation} \label{eqfef5}
B'_k(x)=\sum_{G\in\mathcal G(k)} z'(G)\cdot G, \quad z'(G)\in\mathbb
Q.
\end{equation}
\begin{corollary}
If $G=(V,E)\in\mathcal G$ is a disjoint union of connected subgraphs
$G=G_1\cup\dots\cup G_n$, then we have
\begin{equation} \label{eqfef6}
z'(G)=\begin{cases}\dfrac{(-1)^{|V|+n}}{|{\rm Aut}(G)|}, & \text{if
all } G_i \text{ are strongly connected},
\\
0,& \text{otherwise}.
\end{cases}
\end{equation}

\end{corollary}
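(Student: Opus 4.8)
The plan is to transfer, essentially line by line, the argument that yielded \eqref{eqstrong}, Proposition \ref{sink}, and \eqref{eqdiscon} for the unweighted coefficients $z(G)$. The recursion \eqref{eqfef2} for $B'_k$ has exactly the same shape as \eqref{eqloi} for $B_k$, and Theorem \ref{engfef} plays the role of Theorem \ref{engr}; the only change is that the factor $\det(A(\Gamma_-)-I)$ attached to each pointed graph is replaced by the simpler quantity $(-1)^{|V|}$. I would argue by induction on the weight $k=w(G)$, treating in turn the strongly connected case, the connected but not strongly connected case, and the genuinely disconnected case. Since \eqref{eqfef5} and \eqref{eqfef4} range only over stable graphs, the auxiliary semistable-to-stable conversion used at the end of the proof of Proposition \ref{sink} is not needed here.

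For a single strongly connected $G$, I first observe that in \eqref{eqfef2} the product sum $-\sum_{i+j=k}B'_iB'_j$ contributes only disconnected graphs, and that in the remaining sum any term with $i>0$ creates a nontrivial proper sink (and any term with $j>0$ a nontrivial proper source), so it cannot produce a strongly connected $G$. Hence the only contribution comes from $i=j=0$, $\ell=k$, namely $-R'_k(1)=-r'_k$. By the preceding corollary computing $r'_k$, the coefficient of $G$ there is $(-1)^{|V|}/|{\rm Aut}(G)|$, so $z'(G)=-(-1)^{|V|}/|{\rm Aut}(G)|$, which is the $n=1$ instance of \eqref{eqfef6}.

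For a connected but not strongly connected $G$ I would repeat the sink/source cancellation of Proposition \ref{sink}. Choosing a sink component $C\in\mathcal G^{ss}$ that avoids the distinguished vertex, the two configurations $(\Gamma_-,\tilde B^+,\tilde B^-)=(H+C,B^+,B^-)$ and $(\Gamma_-,\tilde B^+,\tilde B^-)=(H,B^+,B^-\coprod C)$ again exhaust the ways $C$ can appear. Theorem \ref{engfef} now contributes a factor $(-1)^{|V(C)|}$ where \eqref{eqb7} had $\det(A(C)-I)$, while the inductive hypothesis supplies $z'(C)|{\rm Aut}(C)|=-(-1)^{|V(C)|}$ in place of $z(C)|{\rm Aut}(C)|=-\det(A(C)-I)$ used in \eqref{eqb8}. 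Tracking the automorphism factor $\varepsilon(B^-,C)$ exactly as before, the two contributions turn out to be negatives of one another and cancel, giving $z'(G)=0$.

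Finally, for $G=G_1\cup\dots\cup G_n$ I combine the two previous cases: if some $G_i$ is connected but not strongly connected, then the same sink/source cancellation forces $z'(G)=0$; if every $G_i$ is strongly connected, the multiplicative structure of the recursion together with the single-component value just obtained yields $z'(G)=(-1)^{|V|+n}/|{\rm Aut}(G)|$, the exponents of $-1$ simply adding since $|V|=\sum_i|V(G_i)|$. The main obstacle is purely bookkeeping: one must confirm that the cancellation of the second step survives the substitution of $\det(A(C)-I)$ by $(-1)^{|V(C)|}$ and correctly track the exponent of $-1$ through $|V|=|V(H)|+|V(C)|$ and through the disjoint-union automorphism factor. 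Because that substitution is precisely the content of Theorem \ref{engfef} relative to Theorem \ref{engr}, no new analytic input is required and the cancellation is automatic.
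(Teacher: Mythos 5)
Your proposal is correct and follows essentially the same route as the paper: induction on weight using the recursion \eqref{eqfef2} together with \eqref{eqfef7}, the strongly connected case reducing to $-r'_k$, the non-strongly-connected case handled by transplanting the sink/source cancellation of Proposition \ref{sink} with $\det(A(C)-I)$ replaced by $(-1)^{|V(C)|}$, and the all-strongly-connected disconnected case by multiplicativity (which the paper derives via Lemma \ref{graph2} and the symmetry factor $|Sym(G_1,\dots,G_n)|$, absorbing $\prod_j|{\rm Aut}(G_j)|\cdot|Sym|$ into $|{\rm Aut}(G)|$). Your explicit verification that the sign substitution preserves the cancellation is a welcome detail the paper leaves implicit, but it is not a different method.
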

\begin{proof}
When $G$ is strongly connected, we can use \eqref{eqfef2} and
\eqref{eqfef7} to prove that
\begin{equation} \label{eqfef8}
z'(G)=\frac{(-1)^{|V|+1}}{|{\rm Aut}(G)|}.
\end{equation}

In general, if $G=G_1\cup\dots\cup G_n$ is disjoint union of
connected subgraphs and some $G_i$ is not strongly connected, we can
use the same argument of Proposition \ref{sink} to prove that
$z'(G)=0$. If all $G_i$ are strongly connected, it follows from
Lemma \ref{graph2} and \eqref{eqfef7} that
\begin{equation}
z'(G)=\prod_{j=1}^n z'(G_j)/|Sym(G_1,\dots,G_n)|,
\end{equation}
where $Sym(G_1,\dots,G_m)$ denote the permutation group of these $n$
connected subgraphs. So we conclude the proof of the formula
\eqref{eqfef6}.
\end{proof}

\vskip 30pt

\appendix

\section{The value of $Q_4$} \label{apfour}

There are $36$ strongly connected pointed stable graphs $\Gamma$ in
$\dot{\mathcal G}_{scon}(4)$ such that $Q_\Gamma\neq 0$, which are
listed in Table \ref{tb2}. There are $25$ strongly connected pointed
stable graphs $\Gamma$ in $\dot{\mathcal G}_{scon}(4)$ such that
$Q_\Gamma=0$, which are listed in Table \ref{tb3}. Thus we can use
the method of Example \ref{computeq2} to get a curvature-tensor
expression for $Q_4$.

\begin{table}[h] \footnotesize
\centering \caption{$Q_\Gamma$ of $\Gamma\in\dot\Lambda(4)$}
\label{tb2}
\begin{tabular}{|c|c|c|c|c|c|}

\hline $\xymatrix{\bullet \,4}$
     & $\xymatrix{
        \circ \ar@/^/[r]^2
         &
        \bullet\,1 \ar@/^/[l]^2} $
     & $\xymatrix{
        \circ \ar@/^/[r]^2
         &
        \bullet \ar@/^/[l]^3} $
     & $\xymatrix{
        \circ \ar@/^/[r]^3
         &
        \bullet \ar@/^/[l]^2} $
     & $\xymatrix{
         *+[o][F-]{2} \ar@/^/[r]^1
         &
        \bullet\, 1 \ar@/^/[l]^1} $
     & $\xymatrix{
         *+[o][F-]{2} \ar@/^/[r]^1
         &
        \bullet \ar@/^/[l]^2} $
\\
\hline $1/24$ & $-1/4$ & $-1/12$ & $-1/12$ & $1/2$ & $1/4$
\\
\hline $\xymatrix{
         *+[o][F-]{2} \ar@/^/[r]^2
         &
        \bullet \ar@/^/[l]^1} $
     & $\xymatrix{
         *+[o][F-]{3} \ar@/^/[r]^1
         &
        \bullet \ar@/^/[l]^1} $
     & \begin{minipage}{0.6in}$\xymatrix@C=3mm@R=6mm{
                & \bullet \ar[dr]^{2}             \\
        \circ  \ar[ur]^{2}  & &  \circ \ar[ll]^{2}
         }$
         \end{minipage}
     & \begin{minipage}{0.6in}
             $\xymatrix@C=2mm@R=6mm{
                & \bullet\,1 \ar@/^-0.3pc/@<0.2ex>[dl]^{1}             \\
         \circ \ar@/^0.3pc/@<0.7ex>[ur]^{1} \ar@/^-0.3pc/@<0.2ex>[rr]^{1} & & *+[o][F-]{1}
         \ar@/^0.3pc/@<0.7ex>[ll]^{1}
         }$
         \end{minipage}
     &  \begin{minipage}{0.6in}
             $\xymatrix@C=3mm@R=6mm{
                & \bullet \ar@/^-0.3pc/@<0.2ex>[dl]^{1} \ar[dr]^{1}            \\
         \circ \ar@/^0.3pc/@<0.7ex>[ur]^{1} \ar@/^-0.3pc/@<0.2ex>[rr]^{1} & & *+[o][F-]{1}
         \ar@/^0.3pc/@<0.7ex>[ll]^{1}
         }$
         \end{minipage}
     & \begin{minipage}{0.6in}
             $\xymatrix@C=3mm@R=6mm{
                & \bullet \ar@/^-0.3pc/@<0.2ex>[dl]^{2}             \\
         \circ \ar@/^0.3pc/@<0.7ex>[ur]^{1} \ar@/^-0.3pc/@<0.2ex>[rr]^{1} & & *+[o][F-]{1}
         \ar@/^0.3pc/@<0.7ex>[ll]^{1}
         }$
         \end{minipage}
\\
\hline $1/4$ & $1/3$ & $1/8$ & $-1$ & $-1$ & $-1/2$
\\
\hline  \begin{minipage}{0.6in}
             $\xymatrix@C=3mm@R=6mm{
                & \bullet \ar@/^-0.3pc/@<0.2ex>[dl]^{1}           \\
         \circ \ar@/^0.3pc/@<0.7ex>[ur]^{1} \ar@/^-0.3pc/@<0.2ex>[rr]^{1} & & *+[o][F-]{1}
         \ar@/^0.3pc/@<0.7ex>[ll]^{1} \ar[ul]_{1}
         }$
         \end{minipage}
     & \begin{minipage}{0.6in}
             $\xymatrix@C=3mm@R=6mm{
                & \bullet \ar@/^-0.3pc/@<0.2ex>[dl]^{1}             \\
         \circ \ar@/^0.3pc/@<0.7ex>[ur]^{1} \ar@/^-0.3pc/@<0.2ex>[rr]^{1} & & *+[o][F-]{2}
         \ar@/^0.3pc/@<0.7ex>[ll]^{1}
         }$
         \end{minipage}
  &  \begin{minipage}{0.6in}$\xymatrix@C=2mm@R=6mm{
                & \bullet\,1 \ar[dr]^{1}             \\
         \circ \ar[ur]^{1} \ar@/^0.3pc/[rr]^{1} & &  \circ  \ar@/^0.3pc/[ll]^{2}
         }$
         \end{minipage}
     & \begin{minipage}{0.6in}$\xymatrix@C=3mm@R=6mm{
                & \bullet \ar[dr]^{2}             \\
         \circ \ar[ur]^{1} \ar@/^0.3pc/[rr]^{1} & &  \circ  \ar@/^0.3pc/[ll]^{2}
         }$
         \end{minipage}
     &  \begin{minipage}{0.6in}
             $\xymatrix@C=3mm@R=6mm{
                & \bullet \ar@/^-0.3pc/@<0.2ex>[dl]^{1} \ar[dr]^{1}            \\
         \circ \ar@/^0.3pc/@<0.7ex>[ur]^{1} \ar@/^-0.3pc/@<0.2ex>[rr]^{1} & &
         \circ
         \ar@/^0.3pc/@<0.7ex>[ll]^{2}
         }$
         \end{minipage}
     & \begin{minipage}{0.6in}
             $\xymatrix@C=3mm@R=6mm{
                & \bullet \ar@/^-0.3pc/@<0.2ex>[dl]^{1}             \\
         \circ \ar@/^0.3pc/@<0.7ex>[ur]^{1} \ar@/^-0.3pc/@<0.2ex>[rr]^{2} & &
         \circ \ar[ul]_{1}
         \ar@/^0.3pc/@<0.7ex>[ll]^{1}
         }$
         \end{minipage}
\\
\hline $-1$ & $-1$ & $-1/2$ & $-1/4$ & $-1/2$ & $-1/2$
\\
\hline
\begin{minipage}{0.6in}$\xymatrix@C=3mm@R=6mm{
                & \bullet \ar[dr]^{1}             \\
         \circ \ar[ur]^{1} \ar@/^0.3pc/[rr]^{1} & &  *+[o][F-]{1}  \ar@/^0.3pc/[ll]^{2}
         }$
         \end{minipage}
     & \begin{minipage}{0.6in}
             $\xymatrix@C=3mm@R=6mm{
                & \bullet \ar@/^-0.3pc/@<0.2ex>[dl]^{1}             \\
         \circ \ar@/^0.3pc/@<0.7ex>[ur]^{1} \ar@/^-0.3pc/@<0.2ex>[rr]^{1} & & *+[o][F-]{1}
         \ar@/^0.3pc/@<0.7ex>[ll]^{2}
         }$
         \end{minipage}
  &  \begin{minipage}{0.6in}$\xymatrix@C=3mm@R=6mm{
                & \bullet \ar[dr]^{1}             \\
         \circ \ar[ur]^{1} \ar@/^0.3pc/[rr]^{1} & &  \circ  \ar@/^0.3pc/[ll]^{3}
         }$
         \end{minipage}
     & \begin{minipage}{0.6in}
             $\xymatrix@C=3mm@R=6mm{
                & \bullet \ar@/^-0.3pc/@<0.2ex>[dl]^{1}             \\
         \circ \ar@/^0.3pc/@<0.7ex>[ur]^{2} \ar@/^-0.3pc/@<0.2ex>[rr]^{1} & & *+[o][F-]{1}
         \ar@/^0.3pc/@<0.7ex>[ll]^{1}
         }$
         \end{minipage}
     &  \begin{minipage}{0.6in}$\xymatrix@C=3mm@R=6mm{
                & \bullet \ar[dr]^{1}             \\
         \circ \ar[ur]^{2} \ar@/^0.3pc/[rr]^{2} & &  \circ  \ar@/^0.3pc/[ll]^{1}
         }$
         \end{minipage}
     & \begin{minipage}{0.6in}$\xymatrix@C=3mm@R=6mm{
                & \bullet \ar[dr]^{1}             \\
         *+[o][F-]{1} \ar[ur]^{1} \ar@/^0.3pc/[rr]^{1} & &  \circ  \ar@/^0.3pc/[ll]^{2}
         }$
         \end{minipage}
\\
\hline $-1$ & $-1$ & $-1/3$ & $-1/2$ & $-1/4$ & $-1$
\\
\hline
\begin{minipage}{0.6in}
             $\xymatrix@C=3mm@R=6mm{
                & \bullet \ar@/^-0.3pc/@<0.2ex>[dl]^{1}             \\
         \circ \ar@/^0.3pc/@<0.7ex>[ur]^{1} \ar@/^-0.3pc/@<0.2ex>[rr]^{2} & &
         \circ
         \ar@/^0.3pc/@<0.7ex>[ll]^{2}
         }$
         \end{minipage}
     &  \begin{minipage}{0.6in}$\xymatrix@C=3mm@R=6mm{
                & \bullet \ar[dr]^{1}             \\
         \circ \ar[ur]^{1} \ar@/^0.3pc/[rr]^{2} & &  \circ  \ar@/^0.3pc/[ll]^{2}
         }$
         \end{minipage}
  & \begin{minipage}{0.6in}
             $\xymatrix@C=3mm@R=6mm{
                & \bullet \ar@/^-0.3pc/@<0.2ex>[dl]^{1}             \\
         \circ \ar@/^0.3pc/@<0.7ex>[ur]^{1} \ar@/^-0.3pc/@<0.2ex>[rr]^{2} & & *+[o][F-]{1}
         \ar@/^0.3pc/@<0.7ex>[ll]^{1}
         }$
         \end{minipage}
     &  \begin{minipage}{0.6in}
             $\xymatrix@C=3mm@R=6mm{
                & \bullet \ar@/^-0.3pc/@<0.2ex>[dl]^{1}             \\
         *+[o][F-]{1} \ar@/^0.3pc/@<0.7ex>[ur]^{1} \ar@/^-0.3pc/@<0.2ex>[rr]^{1} & & *+[o][F-]{1}
         \ar@/^0.3pc/@<0.7ex>[ll]^{1}
         }$
         \end{minipage}
     &  \begin{minipage}{0.6in}$\xymatrix@C=3mm@R=6mm{
                & \bullet \ar[dr]^{1}             \\
         *+[o][F-]{1} \ar[ur]^{1} \ar@/^0.3pc/[rr]^{1} & &  *+[o][F-]{1} \ar@/^0.3pc/[ll]^{1}
         }$
         \end{minipage}
     &   \begin{minipage}{0.6in} \xymatrix{
            \bullet   \ar@/^0.3pc/[r]^1
                & \circ \ar@/^0.3pc/[l]^1 \ar@/^0.3pc/[d]^{1}  \\
            *+[o][F-]{1} \ar@/^0.3pc/[r]^1
                & \circ \ar@/^0.3pc/[l]^1 \ar@/^0.3pc/[u]^{1}           }
                \end{minipage}
\\
\hline $-3/4$ & $-3/4$ & $-1$ & $-1$ & $-1$ & $1$
\\
\hline
\begin{minipage}{0.7in} \xymatrix{
            \bullet   \ar@/^0.3pc/[r]^1
                & \circ \ar@/^0.3pc/[l]^1 \ar[dl]|-{1} \\
            \circ \ar@/^0.3pc/[r]^2
                & \circ \ar@/^0.3pc/[l]^1 \ar[u]_{1}           }
                \end{minipage}
     &  \begin{minipage}{0.7in}\xymatrix{
  \bullet  \ar[d]^{1}
                & \circ \ar[l]_{1} \ar@/^0.3pc/[d]^{1}  \\
  \circ  \ar@{>}[ur]|-{1} \ar@/^0.3pc/[r]^1
                & \circ \ar@/^0.3pc/[u]^{1}   \ar@/^0.3pc/[l]^1         }
                \end{minipage}
  & \begin{minipage}{0.7in} \xymatrix{
            \bullet   \ar@/^0.3pc/[r]^1
                & \circ \ar@/^0.3pc/[l]^1  \ar[dl]|-{1} \\
            *+[o][F-]{1} \ar[r]_1
                & *+[o][F-]{1} \ar[u]_{1}           }
                \end{minipage}
     &  \begin{minipage}{0.7in}
\xymatrix{
  \bullet \ar[d]_{1}
                & \circ  \ar[l]_{1} \ar@/^0.3pc/@{>}[dl]^{1}\\
  \circ \ar@/^0.3pc/@{>}[ur]^{1} \ar[r]_{1}
                & *+[o][F-]{1}   \ar[u]_{1}         }
\end{minipage}
     &  \begin{minipage}{0.7in}
\xymatrix{
  \bullet \ar[d]_{1}
                & \circ  \ar[l]_{1} \ar[d]^{1}\\
  \circ \ar[ur]^{2}
                & *+[o][F-]{1}   \ar[l]^{1}         }
\end{minipage}
     &   \begin{minipage}{0.7in}
\xymatrix{
  \bullet \ar[d]_{1}
                & \circ \ar[dl]_{1} \ar[l]_{1} \\
  \circ  \ar[r]_{2}
                & \circ   \ar[u]_{2}         }
\end{minipage}
\\
\hline $3/2$ & $2$ & $1$ & $1$ & $1$ & $3/4$
\\
\hline
\end{tabular}
\end{table}
\begin{table}[h] \footnotesize
\centering \caption{$\Gamma\in\dot{\mathcal G}_{scon}(4)$ with
$Q_\Gamma=0$} \label{tb3}
\begin{tabular}{|c|c|c|c|c|}

\hline $\xymatrix{
        *+[o][F-]{1} \ar@/^/[r]^1
         &
        \bullet\,2 \ar@/^/[l]^1} $
     & $\xymatrix{
         *+[o][F-]{1} \ar@/^/[r]^1
         &
        \bullet\,1 \ar@/^/[l]^2} $
      & $\xymatrix{
         *+[o][F-]{1} \ar@/^/[r]^1
         &
        \bullet \ar@/^/[l]^3} $
     &  $\xymatrix{
         *+[o][F-]{1} \ar@/^/[r]^2
         &
        \bullet\,1 \ar@/^/[l]^1} $
     & $\xymatrix{
         *+[o][F-]{1} \ar@/^/[r]^2
         &
        \bullet \ar@/^/[l]^2} $
\\
\hline $\xymatrix{
         *+[o][F-]{1} \ar@/^/[r]^3
         &
        \bullet \ar@/^/[l]^1} $
     & \begin{minipage}{0.6in}
             $\xymatrix@C=3mm@R=6mm{
                & \bullet \ar@/^-0.3pc/@<0.2ex>[dl]^{1}\ar[dr]^{1}            \\
         \circ \ar@/^0.3pc/@<0.7ex>[ur]^{2} & & *+[o][F-]{1}
         \ar[ll]^{1}
         }$
         \end{minipage}
     & \begin{minipage}{0.6in}
             $\xymatrix@C=3mm@R=6mm{
                & \bullet \ar[dr]^{1}            \\
         \circ \ar[ur]^{2}  & & *+[o][F-]{1}
         \ar[ll]^{2}
         }$
         \end{minipage}
     &  \begin{minipage}{0.6in}
             $\xymatrix@C=3mm@R=6mm{
                & \bullet \ar@/^-0.3pc/@<0.2ex>[dl]^{2}            \\
         \circ \ar@/^0.3pc/@<0.7ex>[ur]^{1} \ar[rr]_{1} & & *+[o][F-]{1}
         \ar[ul]_{1}
         }$
         \end{minipage}
     &  \begin{minipage}{0.6in}
             $\xymatrix@C=3mm@R=6mm{
                & \bullet \ar@/^-0.3pc/@<0.2ex>[dl]^{1} \ar@/^0.3pc/@<0.2ex>[dr]^{1}          \\
         \circ \ar@/^0.3pc/@<0.7ex>[ur]^{1} \ar@/^-0.3pc/@<0.2ex>[rr]^{1} & &
         \circ
         \ar@/^0.3pc/@<0.7ex>[ll]^{1} \ar@/^-0.3pc/@<0.7ex>[ul]^{1}
         }$
         \end{minipage}
\\
\hline
       \begin{minipage}{0.6in}
             $\xymatrix@C=3mm@R=6mm{
                & \bullet \ar[dr]^{2}            \\
         *+[o][F-]{1} \ar[ur]^{1}  & & \circ
         \ar[ll]^{2}
         }$
         \end{minipage}
  & \begin{minipage}{0.7in}
             $\xymatrix@C=3mm@R=6mm{
                & \bullet \ar@/^-0.3pc/@<0.2ex>[dl]^{1} \ar@/^0.3pc/@<0.7ex>[dr]^{1}             \\
         *+[o][F-]{1}  \ar@/^0.3pc/@<0.7ex>[ur]^{1}  & &  *+[o][F-]{1}
         \ar@/^-0.3pc/@<0.2ex>[ul]^{1}
         }$
         \end{minipage}
     & \begin{minipage}{0.6in}$\xymatrix@C=2mm@R=6mm{
                & \bullet\, 1 \ar[dr]^{1}             \\
         *+[o][F-]{1} \ar[ur]^{1}  & &  *+[o][F-]{1} \ar[ll]^{1}
         }$
         \end{minipage}
     &  \begin{minipage}{0.6in}$\xymatrix@C=3mm@R=6mm{
                & \bullet \ar[dr]^{2}             \\
         *+[o][F-]{1} \ar[ur]^{1}  & &  *+[o][F-]{1} \ar[ll]^{1}
         }$
         \end{minipage}
     & \begin{minipage}{0.6in}
             $\xymatrix@C=3mm@R=6mm{
                & \bullet \ar@/^-0.3pc/@<0.2ex>[dl]^{1}  \ar[dr]^{1}           \\
         *+[o][F-]{1} \ar@/^0.3pc/@<0.7ex>[ur]^{1}  & & *+[o][F-]{1}
         \ar[ll]^{1}
         }$
         \end{minipage}
\\
\hline
     \begin{minipage}{0.6in}
             $\xymatrix@C=3mm@R=6mm{
                & \bullet \ar@/^-0.3pc/@<0.2ex>[dl]^{1}            \\
         *+[o][F-]{1} \ar@/^0.3pc/@<0.7ex>[ur]^{1} \ar[rr]_{1} & & *+[o][F-]{1}
         \ar[ul]_{1}
         }$
         \end{minipage}
  &  \begin{minipage}{0.6in}$\xymatrix@C=3mm@R=6mm{
                & \bullet \ar[dr]^{1}             \\
        *+[o][F-]{1} \ar[ur]^{1}  & &  *+[o][F-]{2} \ar[ll]^{1}
         }$
         \end{minipage}
     &  \begin{minipage}{0.6in}$\xymatrix@C=3mm@R=6mm{
                & \bullet \ar[dr]^{1}             \\
        *+[o][F-]{1} \ar[ur]^{1}  & &  *+[o][F-]{1} \ar[ll]^{2}
         }$
         \end{minipage}
     &  \begin{minipage}{0.6in}$\xymatrix@C=3mm@R=6mm{
                & \bullet \ar[dr]^{1}             \\
        *+[o][F-]{1} \ar[ur]^{2}  & &  *+[o][F-]{1} \ar[ll]^{1}
         }$
         \end{minipage}
   & \begin{minipage}{0.6in}$\xymatrix@C=3mm@R=6mm{
                & \bullet \ar[dr]^{1}             \\
        *+[o][F-]{2} \ar[ur]^{1}  & &  *+[o][F-]{1} \ar[ll]^{1}
         }$
         \end{minipage}
\\
\hline
        \begin{minipage}{0.7in}\xymatrix{
  \bullet  \ar[d]_{1}
                & \circ \ar[l]_{1} \ar@/^0.3pc/[d]^{1}  \\
*+[o][F-]{1}   \ar[r]_1
                & \circ \ar@/^0.3pc/[u]^{2}          }
                \end{minipage}
  & \begin{minipage}{0.7in} \xymatrix{
            \bullet   \ar[r]^1
                & *+[o][F-]{1}  \ar[d]^{1} \\
            *+[o][F-]{1}  \ar@/^0.3pc/[r]^1
                & \circ \ar@/^0.3pc/[l]^1 \ar[ul]_{1}           }
                \end{minipage}
     &  \begin{minipage}{0.7in}\xymatrix{
  \bullet  \ar[r]^{1}
                & \circ  \ar@/^0.3pc/[d]^{2}  \\
*+[o][F-]{1}   \ar[u]^1
                & \circ \ar@/^0.3pc/[u]^{1}    \ar[l]^1       }
                \end{minipage}
     &  \begin{minipage}{0.7in} \xymatrix{
            \bullet   \ar[dr]^{1}
                & *+[o][F-]{1}  \ar[l]_{1} \\
            *+[o][F-]{1}  \ar@/^0.3pc/[r]^1
                & \circ \ar@/^0.3pc/[l]^1   \ar[u]_{1}        }
                \end{minipage}
     & \begin{minipage}{0.7in}\xymatrix{
  \bullet  \ar[d]_{1}
                & *+[o][F-]{1} \ar[l]_{1}  \\
*+[o][F-]{1}   \ar[r]_1
                & *+[o][F-]{1}    \ar[u]_1       }
                \end{minipage}
\\
\hline
\end{tabular}
\end{table}

$$ \ \ \ \ $$

\

\end{document}